\newtheorem{Theorem}{Theorem}[section]
\newtheorem{Lemma}[Theorem]{Lemma}
\newtheorem{Corollary}[Theorem]{Corollary}
\theoremstyle{definition}
\theoremstyle{remark}
\newtheorem{Remark}[Theorem]{Remark}
\def\@thmcountersep{-}
\numberwithin{equation}{section}
\begin{document} 

\title[On Conway-Gordon type theorems for graphs in the Petersen family]{On Conway-Gordon type theorems for graphs in the Petersen family}

\author{Hiroka Hashimoto}
\address{Division of Mathematics, Graduate School of Science, Tokyo Woman's Christian University, 2-6-1 Zempukuji, Suginami-ku, Tokyo 167-8585, Japan}
\email{etiscatbird@yahoo.co.jp}

\author{Ryo Nikkuni}
\address{Department of Mathematics, School of Arts and Sciences, Tokyo Woman's Christian University, 2-6-1 Zempukuji, Suginami-ku, Tokyo 167-8585, Japan}
\email{nick@lab.twcu.ac.jp}
\thanks{The second author was partially supported by Grant-in-Aid for Young Scientists (B) (No. 21740046), Japan Society for the Promotion of Science.}

\subjclass{Primary 57M15; Secondary 57M25}

\date{}


\keywords{Spatial graph, Intrinsic linkedness, $\triangle Y$-exchange}

\begin{abstract}
For every spatial embedding of each graph in the Petersen family, it is known that the sum of the linking numbers over all of the constituent 2-component links is congruent to 1 modulo 2. In this paper, we give an integral lift of this formula in terms of the square of the linking number and the second coefficient of the Conway polynomial. 
\end{abstract}

\maketitle

\section{Introduction} 

Throughout this paper we work in the piecewise linear category. Let $G$ be a finite graph. An embedding $f$ of $G$ into the $3$-sphere is called a {\it spatial embedding} of $G$ and $f(G)$ is called a {\it spatial graph}. We denote the set of all spatial embeddings of $G$ by ${\rm SE}(G)$. We call a subgraph $\gamma$ of $G$ which is homeomorphic to the circle a {\it cycle} of $G$ and denote the set of all cycles of $G$ by $\Gamma(G)$. In particular, we call a cycle of $G$ which contains exactly $k$ edges a {\it $k$-cycle} of $G$ and denote the set of all $k$-cycles of $G$ by $\Gamma_{k}(G)$. For a positive integer $n$, $\Gamma^{(n)}(G)$ denotes the set of all cycles of $G$ ($=\Gamma(G)$) if $n=1$ and the set of all unions of mutually disjoint $n$ cycles of $G$ if $n\ge 2$. We denote the union of $\Gamma^{(n)}(G)$ over all positive integer $n$ by $\bar{\Gamma}(G)$. For an element $\gamma$ in $\Gamma^{(n)}(G)$ and an element $f$ in ${\rm SE}(G)$, $f(\gamma)$ is none other than a knot in $f(G)$ if $n=1$ and an $n$-component link in $f(G)$ if $n\ge 2$.

A {\it $\triangle Y$-exchange} is an operation to obtain a new graph $G_{Y}$ from a graph $G_{\triangle}$ by removing all edges of a $3$-cycle $\triangle=[uvw]$ of $G_{\triangle}$ with the edges $uv,vw$ and $wu$, and adding a new vertex $x$ and connecting it to each of the vertices $u,v$ and $w$ as illustrated in Fig. \ref{Delta-Y} (we often denote ${ux}\cup {vx}\cup {wx}$ by $Y$). A {\it $Y \triangle$-exchange} is the reverse of this operation. Let $K_{n}$ be the {\it complete graph} on $n$ vertices, namely the simple graph consisting of $n$ vertices in which every pair of distinct vertices is connected by exactly one edge. The set of all graphs obtained from $K_{6}$ by a finite sequence of $\triangle Y$-exchanges and $Y \triangle$-exchanges is called the {\it Petersen family}. The Petersen family consists of seven graphs $K_{6}$, $Q_{7}$, $Q_{8}$, $P_{7}$, $P_{8}$, $P_{9}$ and the {\it Petersen graph} $P_{10}$ as illustrated in Fig. \ref{Petersen}, where an arrow between two graphs indicates the application of a single $\triangle Y$-exchange. For spatial embeddings of a graph in the Petersen family, the following is known. 

\begin{figure}[htbp]
      \begin{center}
\scalebox{0.45}{\includegraphics*{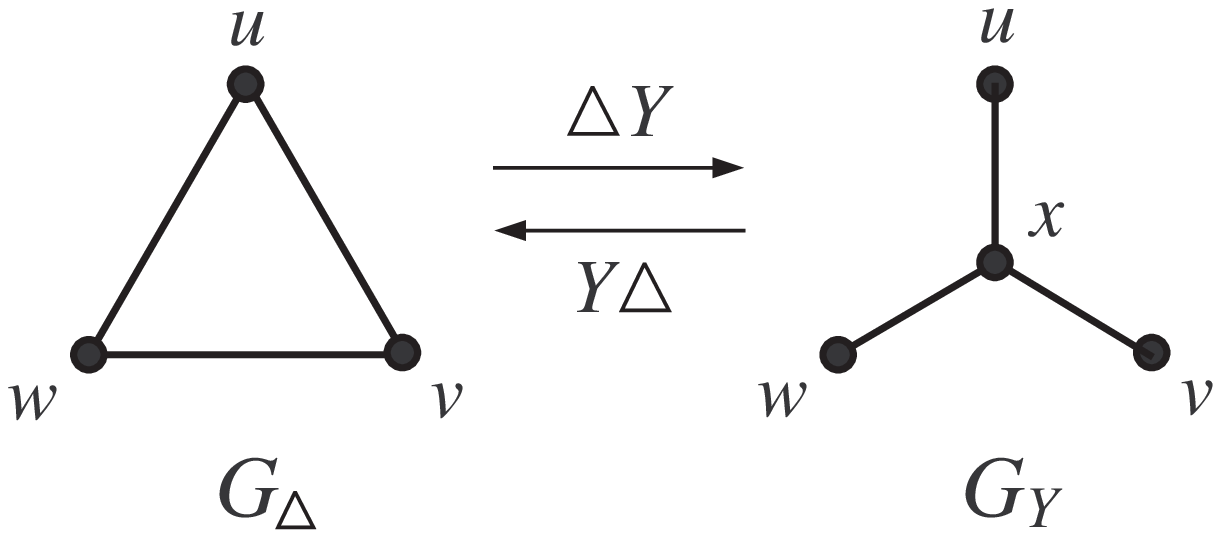}}
      \end{center}
   \caption{}
  \label{Delta-Y}
\end{figure} 
\begin{figure}[htbp]
      \begin{center}
\scalebox{0.4}{\includegraphics*{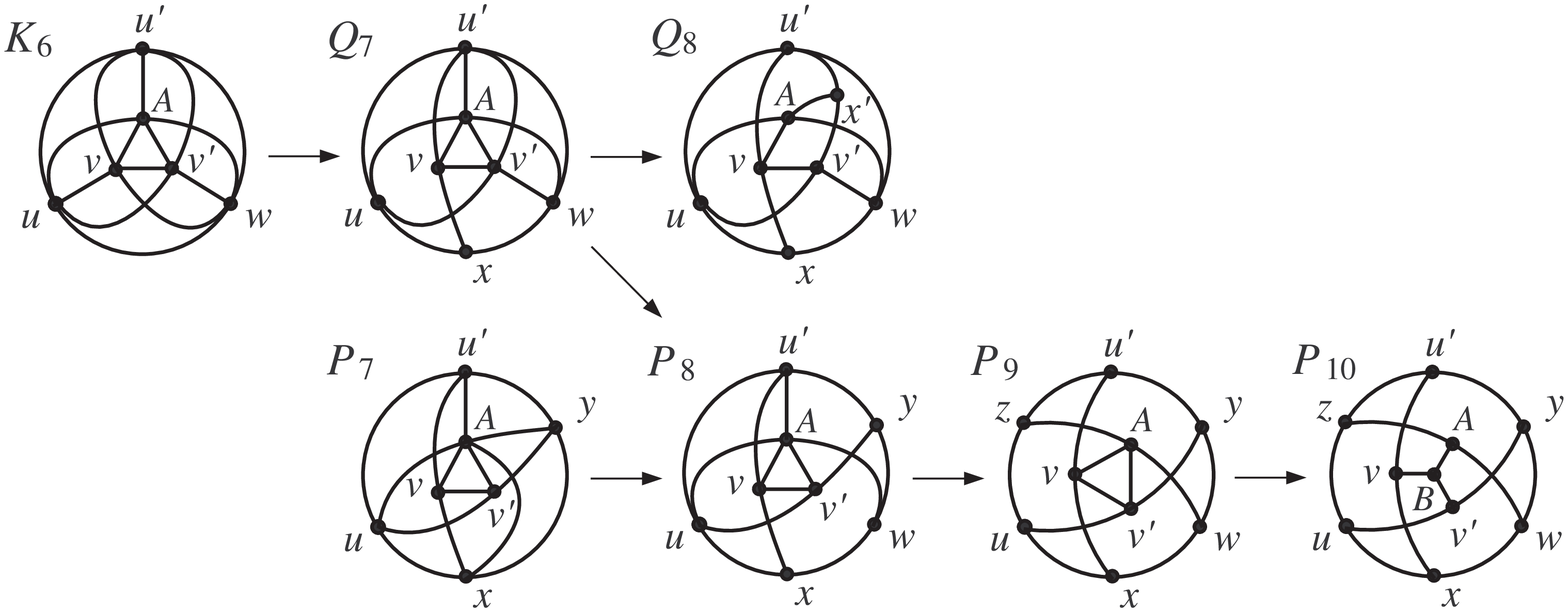}}
      \end{center}
   \caption{}
  \label{Petersen}
\end{figure} 

\begin{Theorem}\label{PF} 
Let $G$ be an element in the Petersen family. For any element $f$ in ${\rm SE}(G)$, it follows that 
\begin{eqnarray*}
\sum_{\gamma\in \Gamma^{(2)}(G)}{\rm lk}(f(\gamma))\equiv 1\pmod{2}, 
\end{eqnarray*}
where ${\rm lk}$ denotes the {\it linking number} in the $3$-sphere. 
\end{Theorem}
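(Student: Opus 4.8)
\medskip

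\noindent\textbf{Proof strategy.} The plan is to prove the formula for $K_{6}$ first -- this is the theorem of Conway and Gordon -- and then to transport it through the whole Petersen family along $\triangle Y$- and $Y\triangle$-exchanges.

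For the base case $G=K_{6}$, consider the integer $\Lambda(f)=\sum_{\gamma\in\Gamma^{(2)}(K_{6})}{\rm lk}(f(\gamma))$ for $f\in{\rm SE}(K_{6})$; the goal is to show that $\Lambda(f)$ is always odd. Any two spatial embeddings of a graph differ by a finite sequence of ambient isotopies and crossing changes, and a single crossing change between edges $e$ and $e'$ alters ${\rm lk}(f(\gamma))$ by $\pm1$ exactly for those $\gamma=\gamma_{1}\cup\gamma_{2}\in\Gamma^{(2)}(K_{6})$ with $e\subset\gamma_{1}$ and $e'\subset\gamma_{2}$. If $e$ and $e'$ share a vertex there are none; if they are vertex-disjoint, then -- since every constituent $2$-component link of $K_{6}$ is a pair of disjoint triangles -- there are exactly two, given by the two ways of splitting the remaining two vertices. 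In either case the count is even, so $\Lambda(f)\bmod 2$ is independent of $f$, and evaluating on a suitable explicit embedding shows it is $1$.

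For the inductive step, let $G_{Y}$ be obtained from $G_{\triangle}$ by a single $\triangle Y$-exchange at a $3$-cycle $\triangle=[uvw]$, with new vertex $x$ and $Y=ux\cup vx\cup wx$, and assume the formula holds for $G_{\triangle}$. Given $f\in{\rm SE}(G_{Y})$, take a regular neighborhood $N$ of the tree $f(Y)$ and replace $f(Y)$ inside $N$ by a triangle $\triangle'$ on $f(u),f(v),f(w)$ running parallel to $f(Y)$; this yields $g\in{\rm SE}(G_{\triangle})$ agreeing with $f$ off $N$. Now compare the two sums by matching constituent $2$-component links. A link of $G_{\triangle}$ using no edge of $\triangle$ is literally a link of $G_{Y}$, with the same linking number. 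At most one component of a $2$-component link of $G_{\triangle}$ uses an edge of $\triangle$ (a component using an edge of $\triangle$ contains two of $u,v,w$, leaving at most one for the other component). If exactly one edge of $\triangle$ is used, replacing it by the length-two path through $x$ gives a constituent link of $G_{Y}$ with a component through $x$; this is a bijection onto such links of $G_{Y}$, and the linking number is unchanged because the relevant side of $\triangle'$ -- being parallel to $f(Y)$ -- is homotopic rel endpoints to the path $f(ux)\cup f(vx)$ within a small neighborhood disjoint from the other component. The remaining links of $G_{\triangle}$ have one component running over two edges of $\triangle$, or equal to $\triangle$; for these, parallelism of $\triangle'$ to $f(Y)$ makes that component homologous, in the complement of the (disjoint) other component, to $f$ of a cycle of $G_{Y}$ obtained by collapsing a doubled traversal of an edge of $f(Y)$, and makes $\triangle'$ itself null-homologous there. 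Combining all of this should give $\sum_{\delta\in\Gamma^{(2)}(G_{\triangle})}{\rm lk}(g(\delta))\equiv\sum_{\gamma\in\Gamma^{(2)}(G_{Y})}{\rm lk}(f(\gamma))\pmod 2$; applying the inductive hypothesis to $g$ then gives the formula for $G_{Y}$. Running the construction in reverse -- splitting a (necessarily unknotted) spatial triangle into a spatial $Y$ close to it -- handles $Y\triangle$-exchanges, and since every graph in the Petersen family is joined to $K_{6}$ by such moves, this completes the argument.

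The step I expect to be the main obstacle is the bookkeeping in the inductive step: checking that the ``degenerate'' constituent links of $G_{\triangle}$ -- those with a component winding over two or three edges of the collapsing triangle -- do not change the sum modulo $2$. The homology computations above identify each such contribution with the linking number of a constituent link of $G_{Y}$ that is already being counted, so what remains is to verify that in aggregate these are picked up an even number of times -- reducing, if necessary, to a finite verification over the exchanges occurring in the Petersen family.
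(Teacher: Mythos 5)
Your overall strategy (Conway--Gordon's crossing-change argument for $K_6$, then transfer along $\triangle Y$-exchanges) is the standard one and is essentially the route the paper formalizes in Section 2 (Theorem \ref{main2}); note, though, that the paper does not prove Theorem \ref{PF} from scratch --- it cites Conway--Gordon, Sachs and Taniyama--Yasuhara, and re-derives it as the mod $2$ reduction of Theorem \ref{petersen_refine}. There are two genuine gaps in your write-up. First, the bookkeeping you flag as ``the main obstacle'' does not resolve by an even-multiplicity cancellation; what makes it work is the combinatorial fact (stated and used in the paper's proof of Theorem \ref{main3}) that in every graph of the Petersen family a pair of disjoint cycles contains \emph{all} of the vertices. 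Granting this: (i) a constituent $2$-component link of $G_\triangle$ cannot have a component traversing exactly two edges of $\triangle$ without being $\triangle$ itself, since its image under your correspondence would be a pair of disjoint cycles of $G_Y$ missing $w$; (ii) for the same reason the correspondence between links using one edge of $\triangle$ and links through $x$ is a genuine bijection, because the alternative closure of the $u$--$v$ arc through $w$ is never available; and (iii) the only remaining degenerate case is a component equal to $\triangle$ itself, which contributes $0$ because the parallel triangle bounds a disk inside the neighborhood $N$, disjoint from the other component. Without this fact the preimages can have size $2$ and the mod-$2$ count would genuinely fail, so you must prove it (a short check on each of the seven graphs) rather than defer it.

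Second, your treatment of $Y\triangle$-exchanges is both necessary and incorrect as stated. It is necessary because $P_7=K_{3,3,1}$ cannot be reached from $K_6$ by $\triangle Y$-exchanges alone (it has seven vertices but is not isomorphic to $Q_7$, the unique result of one $\triangle Y$-exchange on $K_6$), so any path from $K_6$ to $P_7$ in the family uses a $Y\triangle$ step. It is incorrect because the image of a $3$-cycle under a spatial embedding is \emph{not} necessarily unknotted --- each edge is an arbitrary PL arc --- and even an unknotted spatial triangle need not bound a disk disjoint from the rest of the spatial graph, so one cannot ``split a spatial triangle into a nearby $Y$'' by coning. The paper sidesteps this entirely by taking $P_7$ as a second base case (citing O'Donnol, with the mod-$2$ statement due to Sachs and Taniyama--Yasuhara) and only ever transferring in the $\triangle Y$ direction. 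To keep your proof self-contained you should either run the crossing-change parity argument directly on $K_{3,3,1}$ (it works: its constituent $2$-component links are the disjoint triangle--square pairs, and for any two disjoint edges an even number of these separate them), or carry out the $Y\triangle$ transfer correctly by placing the new degree-$3$ vertex in a small neighborhood of one vertex of the spatial triangle with its legs running along the two incident triangle edges, and redoing the (different) link bookkeeping for that construction.
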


We remark here that the case of $G=K_{6}$ in Theorem \ref{PF} is what is called the Conway-Gordon $K_{6}$ theorem \cite{CG83}, and the other cases were shown by Sachs \cite{S84} indirectly, and also pointed out by Taniyama-Yasuhara \cite{TY01}. Theorem \ref{PF} implies that each element $G$ in the Petersen family is {\it intrinsically linked}, that is, for any element $f$ in ${\rm SE}(G)$, there exists an element $\gamma$ in $\Gamma^{(2)}(G)$ such that $f(\gamma)$ is a nonsplittable $2$-component link. It is known that a graph is intrinsically linked if and only if the graph contains an element in the Petersen family as a minor \cite{RST95}. Namely, the Petersen family plays the role of a complete obstruction for graphs not to be intrinsically linked. 

Our purpose in this paper is to give an integral lift of Theorem \ref{PF}. In the following, $a_{i}(L)$ denotes the $i$-th coefficient of the {\it Conway polynomial} for an oriented link $L$.

\begin{Theorem}\label{petersen_refine} 
Let $G$ be an element in the Petersen family. We give the labels for all vertices of $G$ as indicated in Fig. \ref{Petersen}. Let $\omega_{G}$ be a map from $\Gamma(G)$ to the set of integers ${\mathbb Z}$ defined as follows:
\begin{enumerate}
\item If $G=K_{6}$, then for an element $\gamma$ in $\Gamma(K_{6})$, we define 
\begin{eqnarray*}
{\omega_{K_{6}}}(\gamma)=\left\{
       \begin{array}{@{\,}ll}
       1 & \mbox{if $\gamma\in \Gamma_{6}(K_{6})$}\\
       -1 & \mbox{if $\gamma\in \Gamma_{5}(K_{6})$}\\
       0 & \mbox{$otherwise$.}
       \end{array}
     \right.
\end{eqnarray*}

\item If $G=Q_{7}$, then for an element $\gamma$ in $\Gamma(Q_{7})$, we define 
\begin{eqnarray*}
{\omega_{Q_{7}}}(\gamma)=\left\{
       \begin{array}{@{\,}ll}
       1 & \mbox{if $\gamma\in \Gamma_{7}(Q_{7})\cup \Gamma_{6}^{1}(Q_{7})$}\\
       -1 & \mbox{if $\gamma\in \Gamma_{6}^{2}(Q_{7})\cup \Gamma_{5}(Q_{7})$}\\
       0 & \mbox{$otherwise$,}
       \end{array}
     \right.
\end{eqnarray*}
where 
\begin{eqnarray*}
\Gamma_{6}^{1}(Q_{7})&=&\left\{\delta\in \Gamma_{6}(Q_{7})\ |\ \delta \not\ni x\right\},\\
\Gamma_{6}^{2}(Q_{7})&=&\left\{\delta\in \Gamma_{6}(Q_{7})\ |\ \delta\supset \left\{x,u,v,w\right\}\right\}. 
\end{eqnarray*}

\item If $G=Q_{8}$, then for an element $\gamma$ in $\Gamma(Q_{8})$, we define 
\begin{eqnarray*}
{\omega_{Q_{8}}}(\gamma)=\left\{
       \begin{array}{@{\,}ll}
       1 & \mbox{if $\gamma\in \Gamma_{8}(Q_{8})\cup \Gamma_{6}^{1}(Q_{8})$}\\
       -1 & \mbox{if $\gamma\in \Gamma_{6}^{2}(Q_{8})$}\\
       0 & \mbox{$otherwise$,}
       \end{array}
     \right.
\end{eqnarray*}
where 
\begin{eqnarray*}
\Gamma_{6}^{1}(Q_{8})&=&\left\{\delta\in \Gamma_{6}(Q_{8})\ |\ \delta\cap \left\{x,x'\right\}= \emptyset\right\},\\
\Gamma_{6}^{2}(Q_{8})&=&\left\{\delta\in \Gamma_{6}(Q_{8})\ |\ \delta\cap \left\{x,x'\right\}\neq \emptyset\right\}. 
\end{eqnarray*}

\item If $G=P_{7}$, then for an element $\gamma$ in $\Gamma(P_{7})$, we define 
\begin{eqnarray*}
{\omega_{P_{7}}}(\gamma)=\left\{
       \begin{array}{@{\,}ll}
       1 & \mbox{if $\gamma\in \Gamma_{7}(P_{7})$}\\
       -1 & \mbox{if $\gamma\in \Gamma_{5}(P_{7})$}\\
       -2 & \mbox{if $\gamma\in \Gamma_{6}^{1}(P_{7})$}\\
       0 & \mbox{$otherwise$,}
       \end{array}
     \right.
\end{eqnarray*}
where 
\begin{eqnarray*}
\Gamma_{6}^{1}(P_{7})&=&\left\{\delta\in \Gamma_{6}(P_{7})\ |\ \delta\not\ni A\right\}. 
\end{eqnarray*}

\item If $G=P_{8}$, then for an element $\gamma$ in $\Gamma(P_{8})$, we define 
\begin{eqnarray*}
{\omega_{P_{8}}}(\gamma)=\left\{
       \begin{array}{@{\,}ll}
       1 & \mbox{if $\gamma\in \Gamma_{8}(P_{8})\cup \Gamma_{7}^{1}(P_{8})$}\\
       -1 & \mbox{if $\gamma\in \Gamma_{7}^{2}(P_{8})\cup \Gamma_{6}^{1}(P_{8})\cup \Gamma_{5}(P_{8})$}\\
       -2 & \mbox{if $\gamma\in \Gamma_{6}^{2}(P_{8})$}\\
       0 & \mbox{$otherwise$,}
       \end{array}
     \right.
\end{eqnarray*}
where 
\begin{eqnarray*}
\Gamma_{7}^{1}(P_{8})&=&\left\{\delta\in \Gamma_{7}(P_{8})\ |\ \delta \not\supset \left\{x,y,w\right\}\right\},\\
\Gamma_{7}^{2}(P_{8})&=&\left\{\delta\in \Gamma_{7}(P_{8})\ |\ \delta\not\ni A\right\},\\
\Gamma_{6}^{1}(P_{8})&=&\left\{\delta\in \Gamma_{6}(P_{8})\ |\ \delta\ni w\right\},\\
\Gamma_{6}^{2}(P_{8})&=&\left\{\delta\in \Gamma_{6}(P_{8})\ |\ \delta\cap\{A,w\}=\emptyset\right\}. 
\end{eqnarray*}

\item If $G=P_{9}$, then for an element $\gamma$ in $\Gamma(P_{9})$, we define 
\begin{eqnarray*}
{\omega_{P_{9}}}(\gamma)=\left\{
       \begin{array}{@{\,}ll}
       1 & \mbox{if $\gamma\in \Gamma_{9}(P_{9})\cup \Gamma_{8}^{1}(P_{9})$}\\
       -1 & \mbox{if $\gamma\in \Gamma_{7}^{1}(P_{9})\cup \Gamma_{6}^{1}(P_{9})\cup \Gamma_{5}(P_{9})$}\\
       -2 & \mbox{if $\gamma\in \Gamma_{6}^{2}(P_{9})$}\\
       0 & \mbox{$otherwise$,}
       \end{array}
     \right.
\end{eqnarray*}
where 
\begin{eqnarray*}
\Gamma_{8}^{1}(P_{9})&=&\left\{\delta\in \Gamma_{8}(P_{9})\ |\ \delta \supset \left\{A,v,v'\right\}\right\},\\
\Gamma_{7}^{1}(P_{9})&=&\left\{\delta\in \Gamma_{7}(P_{9})\ |\ \delta\not\supset \left\{A,v,v'\right\}\right\},\\
\Gamma_{6}^{1}(P_{9})&=&\left\{\delta\in \Gamma_{6}(P_{9})\ |\ \delta \supset \left\{A,v,v'\right\}\right\},\\
\Gamma_{6}^{2}(P_{9})&=&\left\{\delta\in \Gamma_{6}(P_{9})\ |\ \delta\not\supset \left\{A,v,v'\right\}\right\}. 
\end{eqnarray*}

\item If $G=P_{10}$, then for an element $\gamma$ in $\Gamma(P_{10})$, we define 
\begin{eqnarray*}
{\omega_{P_{10}}}(\gamma)=\left\{
       \begin{array}{@{\,}ll}
       1 & \mbox{if $\gamma\in \Gamma_{9}(P_{10})$}\\
       -2 & \mbox{if $\gamma\in \Gamma_{6}(P_{10})$}\\
       -1 & \mbox{if $\gamma\in \Gamma_{5}(P_{10})$}\\
       0 & \mbox{$otherwise$.}
       \end{array}
     \right.
\end{eqnarray*}
\end{enumerate}
Then for any element $f$ in ${\rm SE}(G)$, it follows that 
\begin{eqnarray*}
2\sum_{\gamma\in \Gamma(G)}\omega_{G}(\gamma)a_{2}(f(\gamma))
=
\left(
\sum_{\gamma\in \Gamma^{(2)}(G)}{\rm lk}(f(\gamma))^{2}
\right)
-1. 
\end{eqnarray*}
\end{Theorem}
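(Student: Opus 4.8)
The plan is to first establish the result for $G = K_6$ by a direct Conway--Gordon--type computation, and then propagate it through the Petersen family along the $\triangle Y$-exchanges depicted in Fig.~\ref{Petersen}. For $K_6$, the starting point is the well-known Conway--Gordon identity $\sum_{\gamma\in\Gamma_7(K_6)}a_2(f(\gamma)) - \sum_{\lambda\in\Gamma^{(2)}(K_6)}{\rm lk}(f(\lambda))^2$ being independent of $f$ (in fact it is a constant, which one evaluates on a standard embedding). Here, though, the weight $\omega_{K_6}$ assigns $1$ to $7$-cycles... wait, to $6$-cycles, and $-1$ to $5$-cycles, so one needs the refined statement that $2\sum_{\gamma}\omega_{K_6}(\gamma)a_2(f(\gamma))$ agrees with $\big(\sum_\lambda {\rm lk}(f(\lambda))^2\big) - 1$. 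I would derive this from the known integral Conway--Gordon formula for $K_6$ together with a second identity expressing $\sum_{\gamma\in\Gamma_5(K_6)}a_2(f(\gamma))$ and $\sum_{\gamma\in\Gamma_6(K_6)}a_2(f(\gamma))$ in terms of the $a_2$ of the Hamiltonian cycles, or better, by invoking the Taniyama--Yasuhara style identities for $a_2$ under edge-contraction/deletion; the upshot is that the alternating sum over $5$- and $6$-cycles is exactly the combination appearing above.

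The core of the argument is the behavior under a single $\triangle Y$-exchange $G_\triangle \leadsto G_Y$, with $\triangle = [uvw]$ replaced by $Y = ux\cup vx\cup wx$. Given $f\in{\rm SE}(G_Y)$, I would choose a spatial embedding $g\in{\rm SE}(G_\triangle)$ that agrees with $f$ off a small ball around $x$ and in which the triangle $[uvw]$ is a small unknotted arc-pushoff of the $Y$; such a $g$ can be taken so that every cycle of $G_\triangle$ not through all of $u,v,w$ is ambient isotopic to the corresponding cycle of $G_Y$, while a cycle through the path $u$--$w$--$v$ (say) in $G_\triangle$ corresponds to the cycle through $u$--$x$--$v$ in $G_Y$, again isotopically. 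The only cycles without an obvious counterpart are those containing two edges of $\triangle$ (equivalently, the three vertices $u,v,w$ consecutively): in $G_\triangle$ a cycle $\gamma\cup e_{uw}\cup e_{wv}$ has no image in $G_Y$, and conversely there is a cycle through $u,x,v$ using only two of the three spokes. The key local computation is that, for this choice of $g$, one has a controlled relation between $a_2$ of the "triangle-type" cycles in $g(G_\triangle)$ and $a_2$ of the "$Y$-type" cycles in $f(G_Y)$ — essentially because resolving the clasp at $x$ relates the two knot types by a crossing change whose effect on $a_2$ is governed by a linking number, and the linking numbers ${\rm lk}(f(\lambda))$ for $\lambda\in\Gamma^{(2)}(G_Y)$ are each equal to the corresponding ${\rm lk}(g(\lambda'))$ for $G_\triangle$ except for the $2$-component links whose components meet $x$, which get reorganized. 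One then checks that the right-hand side $\big(\sum_{\Gamma^{(2)}}{\rm lk}^2\big)-1$ is unchanged by the exchange (since Theorem~\ref{PF} already guarantees the mod $2$ class is preserved and the set of $2$-component links transforms bijectively up to the local clasp), and that the weights $\omega_{G_\triangle}$ and $\omega_{G_Y}$ have been \emph{defined precisely} so that $\sum_\gamma \omega_{G_\triangle}(\gamma)a_2(g(\gamma)) = \sum_\gamma \omega_{G_Y}(\gamma)a_2(f(\gamma))$ for this compatible pair $(g,f)$. Thus the identity for $G_\triangle$ implies the identity for $G_Y$, and since the latter is independent of $f$ it holds for all of ${\rm SE}(G_Y)$.

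Concretely I would organize the write-up as: (i) a lemma computing, for a compatible pair $(g,f)$ across a $\triangle Y$-move, the difference $a_2(g(\gamma_\triangle)) - a_2(f(\gamma_Y))$ for each matched pair of cycles and the correspondence of constituent links, probably via the Conway skein relation applied at the clasp near $x$ together with the formula $a_2(K_+) - a_2(K_-) = {\rm lk}(L_0)$ for the two-component link $L_0$ obtained by smoothing; (ii) a bookkeeping step verifying, graph by graph for each arrow in Fig.~\ref{Petersen}, that the stated $\omega_G$ are mutually consistent under this correspondence (this is where the auxiliary subsets $\Gamma_6^1, \Gamma_6^2$, etc., get their definitions justified); (iii) the base case $K_6$; and (iv) assembling (i)--(iii) into the induction. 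The main obstacle I anticipate is step (ii): the cycle structure changes in a combinatorially intricate way under $\triangle Y$ — a single move can split one family of $k$-cycles into two subfamilies with different weights, and the "extra" cycles through two triangle-edges must be accounted for against cycles through two spokes — so verifying that the weights in the theorem statement are exactly the ones forced by the local $a_2$-relation, for all seven graphs simultaneously, is delicate. A secondary subtlety is ensuring the compatible embedding $g$ can always be chosen so that \emph{all} relevant cycles and constituent links behave as claimed (no unwanted linking is introduced near $x$), which requires a careful local model for the $\triangle Y$-exchange.
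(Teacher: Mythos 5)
Your overall architecture --- prove a base case and push the identity through the $\triangle Y$-exchanges, transforming the weights by summing over the preimages of the cycle correspondence --- is exactly the paper's (this transfer principle is Theorem \ref{main2}, quoted from Nikkuni--Taniyama, and it yields Theorem \ref{main3}). But your local model for a single exchange is wrong in a way that matters. For the compatible pair $(g,f)$ one places the triangle inside a small ball neighborhood of $f(Y)$; then every cycle of $G_{\triangle}$ using one or two edges of $\triangle$ is \emph{ambient isotopic} to its image cycle in $G_{Y}$, so no clasp, no crossing change, and no skein relation $a_{2}(K_{+})-a_{2}(K_{-})={\rm lk}(L_{0})$ enters the argument. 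Moreover, the cycles that genuinely lack a counterpart are not those through two edges of $\triangle$ (those map under $\bar{\Phi}$ to the cycle through the two corresponding spokes, and are precisely where $\bar{\Phi}$ fails to be injective); they are the elements of $\bar{\Gamma}(G_{\triangle})$ containing $\triangle$ as an entire component, which are harmless only because a compressible invariant vanishes on a link with a small split unknotted component. Getting this correspondence right is not cosmetic: the two-to-one locus of $\bar{\Phi}$ is exactly what produces the subfamilies $\Gamma_{6}^{1}$, $\Gamma_{6}^{2}$, etc.\ and their differing weights.

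There are two further gaps. First, $P_{7}$ is not obtainable from $K_{6}$ by $\triangle Y$-exchanges: each exchange adds a vertex, and the unique $7$-vertex graph reachable from $K_{6}$ is $Q_{7}$, which is not isomorphic to $P_{7}$. An induction rooted only at $K_{6}$ therefore never reaches case (4); the paper uses O'Donnol's independent computation for $P_{7}$ as a second root of the family. Second, and most substantially, your step (ii) --- checking that the weights in the statement are exactly $\sum_{\gamma'\in\bar{\Phi}^{-1}(\gamma)}\omega(\gamma')$ for each arrow of Fig.~\ref{Petersen} --- is the entire content of the paper's proof of this theorem, and you defer it. It requires a case-by-case enumeration of the preimages of each class of cycles, and the paper additionally needs an order-independence statement for sequences of exchanges at edge-disjoint triangles (Lemmas \ref{inverse} and \ref{ab}, via ``stable'' families) to justify the symmetry reductions that make the enumeration feasible. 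Without (ii) you have only re-derived the existence statement of Theorem \ref{main3}, not the explicit formula claimed in Theorem \ref{petersen_refine}.
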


Note that Theorem \ref{PF} can be obtained from Theorem \ref{petersen_refine} by taking the modulo two reduction. We also should remark here that Theorem \ref{petersen_refine} was already known in the case of $G=K_{6}$ (Nikkuni \cite{N09b}), $P_{7}$ (O'Donnol \cite{D10}) and $Q_{7}$ (Nikkuni-Taniyama \cite{NT12}). The other cases are new.  

We say that an element $f$ in ${\rm SE}(G)$ is {\it knotted} if $f(G)$ contains a nontrivial knot, and {\it complexly algebraically linked} if $f(G)$ contains a $2$-component link whose linking number is not equal to $0,\pm 1$ or a pair of $2$-component links with nonzero linking number \cite{D10}. Then O'Donnol showed the following. 

\begin{Theorem}\label{CAlinked} 
{\rm (O'Donnol \cite{D10})} Let $G$ be an element in the Petersen family and $f$ an element in ${\rm SE}(G)$. If $f$ is complexly algebraically linked, then $f$ is knotted. 
\end{Theorem}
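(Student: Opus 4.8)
The plan is to deduce Theorem~\ref{CAlinked} directly from the integral lift in Theorem~\ref{petersen_refine}, exploiting the fact that the right-hand side of that formula is controlled by the \emph{squares} of the linking numbers. The key observation is that being complexly algebraically linked forces the quantity $\sum_{\gamma\in\Gamma^{(2)}(G)}{\rm lk}(f(\gamma))^{2}$ to be at least $2$, whereas the ``generic'' intrinsically linked embedding (with exactly one Hopf-type sublink) realizes the minimal value $1$ permitted by Theorem~\ref{PF}.

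First I would record this elementary lower bound. Suppose $f$ is complexly algebraically linked. By definition, $f(G)$ either contains a single $\gamma_{0}\in\Gamma^{(2)}(G)$ with $\abs{{\rm lk}(f(\gamma_{0}))}\ge 2$, or contains two distinct elements $\gamma_{1},\gamma_{2}\in\Gamma^{(2)}(G)$ with ${\rm lk}(f(\gamma_{i}))\neq 0$ for $i=1,2$. In the first case the single term ${\rm lk}(f(\gamma_{0}))^{2}\ge 4$ already gives a sum of squares at least $4$; in the second case the two terms each contribute at least $1$. In either case
\[
\sum_{\gamma\in\Gamma^{(2)}(G)}{\rm lk}(f(\gamma))^{2}\ge 2 .
\]

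Next I would feed this into Theorem~\ref{petersen_refine}. The displayed identity shows that the right-hand side $\bigl(\sum_{\gamma\in\Gamma^{(2)}(G)}{\rm lk}(f(\gamma))^{2}\bigr)-1$ is at least $1$, hence strictly positive, so the left-hand side is nonzero:
\[
\sum_{\gamma\in\Gamma(G)}\omega_{G}(\gamma)\,a_{2}(f(\gamma))\neq 0 .
\]
Since this is a finite sum that does not vanish, there exists at least one cycle $\gamma\in\Gamma(G)=\Gamma^{(1)}(G)$ with $a_{2}(f(\gamma))\neq 0$. Now $f(\gamma)$ is a knot in $f(G)$, and the second coefficient of its Conway polynomial vanishes whenever $f(\gamma)$ is the trivial knot (the Conway polynomial of the unknot being $1$). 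Hence $a_{2}(f(\gamma))\neq 0$ forces $f(\gamma)$ to be a nontrivial knot, so $f(G)$ contains a nontrivial knot and $f$ is knotted, as required.

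Once Theorem~\ref{petersen_refine} is in hand the argument has essentially no serious obstacle: the only points requiring care are the short case analysis establishing $\sum{\rm lk}^{2}\ge 2$ and the observation that $a_{2}$ detects knottedness. I expect the conceptual content to lie entirely in the already-established integral lift, so the main ``work'' here is simply unwinding the definition of complexly algebraically linked into the lower bound on the sum of squares, and then reading off the sign of the right-hand side.
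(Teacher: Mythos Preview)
Your argument is correct and follows essentially the same route as the paper: feed the linking-number lower bound into the integral lift of Theorem~\ref{petersen_refine} to force some $a_{2}(f(\gamma))\neq 0$. The only minor difference is that the paper also invokes Theorem~\ref{PF} to note that $\sum_{\gamma}{\rm lk}(f(\gamma))^{2}$ is always odd, upgrading your bound $\ge 2$ to $\ge 3$ and yielding the sharper conclusion $\sum_{\gamma}\omega_{G}(\gamma)a_{2}(f(\gamma))\ge 1$ of Corollary~\ref{CAlinked_refine}; for Theorem~\ref{CAlinked} itself your weaker bound already suffices.
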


In \cite{D10}, the place of the cycle whose image is a nontrivial knot was not examined. As an application of Theorem \ref{petersen_refine}, we give an alternative proof of Theorem \ref{CAlinked} in more refined form as follows. 

\begin{Corollary}\label{CAlinked_refine} 
Let $G$ be an element in the Petersen family and $\omega_{G}$ the map from $\Gamma(G)$ to ${\mathbb Z}$ in Theorem \ref{petersen_refine}. Let $f$ be an element in ${\rm SE}(G)$. If $f$ is complexly algebraically linked, then it follows that 
\begin{eqnarray*}
\sum_{\gamma\in \Gamma(G)}{\omega_{G}}(\gamma)a_{2}(f(\gamma))\ge 1.
\end{eqnarray*}
\end{Corollary}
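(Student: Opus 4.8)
The plan is to derive Corollary~\ref{CAlinked_refine} directly from Theorem~\ref{petersen_refine} together with a careful bookkeeping of the linking numbers. Starting from the integral formula
\begin{eqnarray*}
2\sum_{\gamma\in \Gamma(G)}\omega_{G}(\gamma)a_{2}(f(\gamma))
=
\left(\sum_{\gamma\in \Gamma^{(2)}(G)}{\rm lk}(f(\gamma))^{2}\right)-1,
\end{eqnarray*}
it suffices to show that the right-hand side is at least $2$ whenever $f$ is complexly algebraically linked; equivalently, that $\sum_{\gamma\in \Gamma^{(2)}(G)}{\rm lk}(f(\gamma))^{2}\ge 3$. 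First I would recall from Theorem~\ref{PF} that $\sum_{\gamma\in \Gamma^{(2)}(G)}{\rm lk}(f(\gamma))\equiv 1\pmod 2$, so in particular the number of $\gamma\in\Gamma^{(2)}(G)$ with ${\rm lk}(f(\gamma))$ odd is itself odd, hence at least one; thus the sum of squares is always a positive odd-related quantity and is $\ge 1$, recovering the trivial bound. The point is to upgrade this to $\ge 3$ under the complexly-algebraically-linked hypothesis.

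The key step is a short case analysis on the definition of ``complexly algebraically linked.'' In the first case, $f(G)$ contains a constituent $2$-component link $f(\gamma_{0})$ with $|{\rm lk}(f(\gamma_{0}))|\ge 2$, so ${\rm lk}(f(\gamma_{0}))^{2}\ge 4$ and already $\sum_{\gamma}{\rm lk}(f(\gamma))^{2}\ge 4>3$. In the second case, $f(G)$ contains a pair of constituent $2$-component links $f(\gamma_{1}),f(\gamma_{2})$ each with nonzero linking number; since ${\rm lk}(f(\gamma_{i}))^{2}\ge 1$ for $i=1,2$, we get $\sum_{\gamma}{\rm lk}(f(\gamma))^{2}\ge 2$. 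To push this to $3$ I would invoke the parity constraint from Theorem~\ref{PF}: if only $\gamma_{1}$ and $\gamma_{2}$ contributed and each had $|{\rm lk}|=1$, the total sum of linking numbers would be $\pm 1\pm 1\in\{-2,0,2\}$, contradicting $\sum_{\gamma}{\rm lk}(f(\gamma))\equiv 1\pmod 2$; hence there must be a third cycle $\gamma_{3}$ with ${\rm lk}(f(\gamma_{3}))$ odd, in particular nonzero, giving $\sum_{\gamma}{\rm lk}(f(\gamma))^{2}\ge 3$. Dividing the displayed identity by $2$ then yields $\sum_{\gamma\in\Gamma(G)}\omega_{G}(\gamma)a_{2}(f(\gamma))\ge 1$, as desired.

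The main subtlety to watch is the precise reading of the definition from \cite{D10}: whether ``a pair of $2$-component links with nonzero linking number'' means two \emph{distinct} constituent links, and whether in the first case the offending link with $|{\rm lk}|\ge 2$ is itself a constituent $2$-component link of $f(G)$ (so that it appears in the sum over $\Gamma^{(2)}(G)$). Assuming the natural reading, both cases are handled as above; if a constituent link has $|{\rm lk}|\ge 2$ that case is immediate, and otherwise the parity argument supplies the missing third odd-linked cycle. I expect the only real work to be organizing these cases so that they cover the hypothesis exhaustively, and confirming that the parity statement of Theorem~\ref{PF} is strong enough to rule out the borderline configuration with exactly two $\pm 1$-linked cycles and all others zero; this is where I anticipate needing to be most careful, but no new geometric input beyond Theorems~\ref{PF} and~\ref{petersen_refine} should be required.
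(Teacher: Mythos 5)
Your proposal is correct and follows essentially the same route as the paper: both reduce the corollary to the claim that the complexly-algebraically-linked hypothesis forces $\sum_{\gamma\in \Gamma^{(2)}(G)}{\rm lk}(f(\gamma))^{2}\ge 3$ (via the parity constraint of Theorem \ref{PF}), and then divide the identity of Theorem \ref{petersen_refine} by two. The paper simply asserts this bound as an equivalence following from Theorem \ref{PF}, whereas you supply the short case analysis justifying it; your bookkeeping (ruling out the configuration of exactly two $\pm 1$-linked cycles by parity) is the correct way to fill in that implicit step.
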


\begin{proof}
By Theorem \ref{PF}, it follows that $f$ is complexly algebraically linked if and only if 
\begin{eqnarray}\label{CAlk}
\sum_{\gamma\in \Gamma^{(2)}(G)}{\rm lk}(f(\gamma))^{2}\ge 3.
\end{eqnarray}
Then by Theorem \ref{petersen_refine} and (\ref{CAlk}), it follows that 
\begin{eqnarray*}
\sum_{\gamma\in \Gamma(G)}{\omega_{G}}(\gamma)a_{2}(f(\gamma))
=\frac{1}{2}\left\{
\left(
\sum_{\gamma\in \Gamma^{(2)}(G)}{\rm lk}(f(\gamma))^{2}
\right)
-1
\right\}
\ge 1. 
\end{eqnarray*}
Thus we have the desired conclusion. 
\end{proof}

By Corollary \ref{CAlinked_refine}, there exists an element $\gamma_{0}$ in $\Gamma(G)$ with $\omega_{G}(\gamma_{0})\neq 0$ such that $a_{2}(f(\gamma_{0}))\neq 0$. Namely, Corollary \ref{CAlinked_refine} refines Theorem \ref{CAlinked} by identifying the cycles that might be nontrivial knots in $f(G)$. 

In the next section, we prepare general results for $\triangle Y$-exchanges and the Conway-Gordon type theorems which are based on \cite{NT12}. We give a proof of Theorem \ref{petersen_refine} in section $3$.

\section{General results} 

Let $G_{\triangle}$ and $G_{Y}$ be two graphs such that $G_{Y}$ is obtained from $G_{\triangle}$ by a single $\triangle Y$-exchange. We denote the set of all elements in $\bar{\Gamma}(G_{\triangle})$ containing $\triangle$ by $\bar{\Gamma}_{\triangle}(G_{\triangle})$. 
Let $\gamma'$ be an element in $\bar{\Gamma}(G_{\triangle})$ which does not contain $\triangle$. Then there exists an element $\bar{\Phi}(\gamma')$ in $\bar{\Gamma}(G_{Y})$ such that $\gamma'\setminus \triangle={{\bar{\Phi}}(\gamma')}\setminus Y$. It is easy to see that the correspondence from $\gamma'$ to $\bar{\Phi}(\gamma')$ defines a surjective map 
\begin{eqnarray}\label{phi}
\bar{\Phi}=\bar{\Phi}_{G_{\triangle},G_{Y}}:\bar{\Gamma}(G_{\triangle})\setminus \bar{\Gamma}_{\triangle}(G_{\triangle})\longrightarrow \bar{\Gamma}(G_{Y}).
\end{eqnarray}
In particular, if $\gamma'$ is an element in $\Gamma^{(n)}(G_{\triangle})\setminus \bar{\Gamma}_{\triangle}(G_{\triangle})$ then $\bar{\Phi}(\gamma')$ is an element in $\Gamma^{(n)}(G_{Y})$. This implies that the restriction map of $\bar{\Phi}$ on $\Gamma^{(n)}(G_{\triangle})\setminus \bar{\Gamma}_{\triangle}(G_{\triangle})$ induces a surjective map from $\Gamma^{(n)}(G_{\triangle})\setminus \bar{\Gamma}_{\triangle}(G_{\triangle})$ to $\Gamma^{(n)}(G_{Y})$, and thus it is clear that if $\Gamma^{(n)}(G_{\triangle})$ is an empty set, then $\Gamma^{(n)}(G_{Y})$ is also an empty set. The inverse image of an element $\gamma$ in $\bar{\Gamma}(G_{Y})$ by $\bar{\Phi}$ contains at most two elements in $\bar{\Gamma}(G_{\triangle})\setminus \bar{\Gamma}_{\triangle}(G_{\triangle})$. Fig. \ref{not_inj2} illustrates the case that the inverse image of $\gamma$ by $\bar{\Phi}$ consists of  exactly two elements. In general, the inverse image of $\gamma$ by $\bar{\Phi}$ consists of exactly one element if and only if $\gamma$ contains $u,v,w$ and $x$, or $\gamma$ does not contain $x$. 

\begin{figure}[htbp]
      \begin{center}
\scalebox{0.425}{\includegraphics*{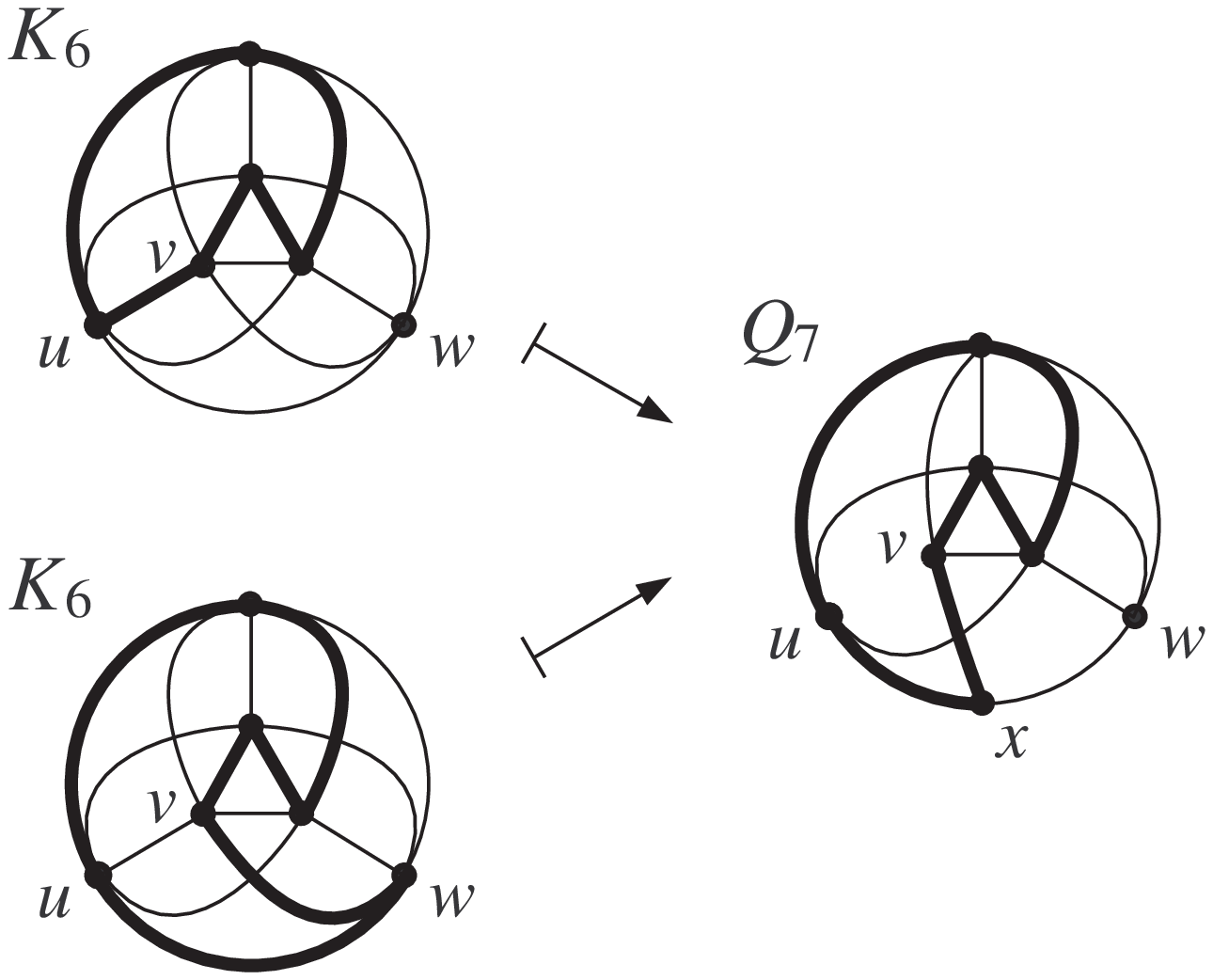}}
      \end{center}
   \caption{}
  \label{not_inj2}
\end{figure} 

Let $A$ be an additive group. We say that an $A$-valued unoriented link invariant $\alpha$ is {\it compressible} if $\alpha(L)=0$ for any unoriented link $L$ which has a component $K$ bounding a disk $D$ in the $3$-sphere with $D\cap L = \partial D = K$. Suppose that for each element $\gamma'$ in $\bar{\Gamma}(G_{\triangle})$, an $A$-valued unoriented link invariant $\alpha_{\gamma'}$ is assigned. Then for each element $\gamma$ in $\bar{\Gamma}(G_{Y})$, we define an $A$-valued unoriented link invariant $\tilde{\alpha}_\gamma$ by
\begin{eqnarray*}
\tilde{\alpha}_\gamma(L)=\sum_{\gamma'\in\bar{\Phi}^{-1}(\gamma)}\alpha_{\gamma'}(L)
\end{eqnarray*}
for an unoriented link $L$. Then the following theorem holds. 

\begin{Theorem}\label{main2} 
{\rm (Nikkuni-Taniyama \cite{NT12})} 
Suppose that $\alpha_{\gamma'}$ is compressible for each element $\gamma'$ in $\bar{\Gamma}_\triangle(G_\triangle)$. Suppose that there exists a fixed element $c$ in $A$ such that
\begin{eqnarray*}
\sum_{\gamma'\in\bar{\Gamma}(G_{\triangle})}\alpha_{\gamma'}(g(\gamma'))=c
\end{eqnarray*}
for any element $g$ in ${\rm SE}(G_{\triangle})$. Then we have
\begin{eqnarray*}
\sum_{\gamma\in\bar{\Gamma}(G_{Y})}\tilde{\alpha}_\gamma(f(\gamma))=c
\end{eqnarray*}
for any element $f$ in ${\rm SE}(G_{Y})$.
\end{Theorem}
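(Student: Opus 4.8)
The plan is to verify the claimed identity by directly comparing the two sums, term by term, across the map $\bar\Phi$. First I would split the left-hand sum over $\bar\Gamma(G_\triangle)$ into the part indexed by $\bar\Gamma_\triangle(G_\triangle)$ (cycles or unions of cycles containing the triangle $\triangle$) and its complement $\bar\Gamma(G_\triangle)\setminus\bar\Gamma_\triangle(G_\triangle)$. By hypothesis each $\alpha_{\gamma'}$ with $\gamma'\in\bar\Gamma_\triangle(G_\triangle)$ is compressible, so I would argue that for any $g\in{\rm SE}(G_\triangle)$ the link $g(\gamma')$ has a component (namely the image of the sub-path of $\gamma'$ running through the two triangle edges, which together with a small disk pushed off $g(\triangle)$ bounds an embedded disk meeting the rest of $g(\gamma')$ only in its boundary) that exhibits $g(\gamma')$ as a ``compressible'' link in the required sense; hence $\alpha_{\gamma'}(g(\gamma'))=0$. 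Consequently
\begin{eqnarray*}
\sum_{\gamma'\in\bar\Gamma(G_\triangle)\setminus\bar\Gamma_\triangle(G_\triangle)}\alpha_{\gamma'}(g(\gamma'))=c
\end{eqnarray*}
for every $g\in{\rm SE}(G_\triangle)$.

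Next I would set up the comparison between spatial embeddings of $G_\triangle$ and of $G_Y$. Given $f\in{\rm SE}(G_Y)$, I would construct a companion $g=g_f\in{\rm SE}(G_\triangle)$ that agrees with $f$ on the common subgraph $G_\triangle\setminus\triangle=G_Y\setminus Y$, by replacing the image of $Y$ with a suitably small embedded image of $\triangle$ lying in a regular neighborhood of $f(Y)$; this can be done so that for every $\gamma'\in\bar\Gamma(G_\triangle)\setminus\bar\Gamma_\triangle(G_\triangle)$ the links $g(\gamma')$ and $f(\bar\Phi(\gamma'))$ are ambient isotopic in the $3$-sphere (they differ only by replacing a trivial $Y$-shaped arc-system by a trivial $\triangle$-shaped one in a small ball, outside of which they coincide). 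Therefore $\alpha_{\gamma'}(g(\gamma'))=\alpha_{\gamma'}(f(\bar\Phi(\gamma')))$ for all such $\gamma'$. Now I would regroup the sum over $\bar\Gamma(G_\triangle)\setminus\bar\Gamma_\triangle(G_\triangle)$ according to the fibers of the surjection $\bar\Phi$:
\begin{eqnarray*}
c=\sum_{\gamma'\in\bar\Gamma(G_\triangle)\setminus\bar\Gamma_\triangle(G_\triangle)}\alpha_{\gamma'}(g(\gamma'))
=\sum_{\gamma\in\bar\Gamma(G_Y)}\ \sum_{\gamma'\in\bar\Phi^{-1}(\gamma)}\alpha_{\gamma'}(f(\gamma))
=\sum_{\gamma\in\bar\Gamma(G_Y)}\tilde\alpha_\gamma(f(\gamma)),
\end{eqnarray*}
which is exactly the desired conclusion. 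Note that it is essential here that $\alpha_{\gamma'}$ is a link \emph{invariant}, so that the value depends only on the ambient isotopy class $f(\gamma)=f(\bar\Phi(\gamma'))$ and not on the chosen representative, which is what makes the inner sum collapse to $\tilde\alpha_\gamma(f(\gamma))$.

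The main obstacle, and the step deserving the most care, is the geometric claim that $g=g_f$ can be chosen so that $g(\gamma')$ is ambient isotopic to $f(\bar\Phi(\gamma'))$ \emph{simultaneously} for all relevant $\gamma'$, together with the verification that the compressible cycles really do contribute zero. For the first point one works in a small ball $B$ meeting $f(G_Y)$ only in the three radial arcs forming $f(Y)$ and replaces this trivial cone on three points by the boundary $\partial\triangle$ of a small triangle in $B$ with the same three endpoints on $\partial B$; any $\gamma'$ not containing $\triangle$ meets $B$ in either the empty set, one arc, or a path through two of the three radial arcs, and in each case the local replacement is an ambient isotopy of the corresponding link, fixed outside $B$. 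For the second point one observes that when $\gamma'\supset\triangle$, the two edges of $\triangle$ lying in $\gamma'$ form an arc which, pushed slightly off the straight segments, cobounds with the third edge a disk in $B$ hitting $g(\gamma')$ only along its boundary, so compressibility applies. Once these two local-geometry lemmas are in place, the rest is the bookkeeping displayed above.
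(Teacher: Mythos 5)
The paper itself does not prove Theorem \ref{main2}; it quotes it from \cite{NT12}, so I am comparing your argument against the standard proof rather than against one printed here. Your overall architecture is that standard proof: from $f\in{\rm SE}(G_{Y})$ build a companion $g_{f}\in{\rm SE}(G_{\triangle})$ by replacing $f(Y)$ with a small triangle inside a ball $B$ that is a regular neighborhood of $f(Y)$, check that $g_{f}(\gamma')$ is ambient isotopic to $f(\bar{\Phi}(\gamma'))$ for every $\gamma'\notin\bar{\Gamma}_{\triangle}(G_{\triangle})$, kill the terms indexed by $\bar{\Gamma}_{\triangle}(G_{\triangle})$ using compressibility, and regroup the remaining sum over the fibers of the surjection $\bar{\Phi}$. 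That skeleton, including your remark that only the ambient isotopy class of each individual $g_{f}(\gamma')$ matters because $\alpha_{\gamma'}$ is an invariant, is sound.

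There is, however, one assertion that is false as written. In your first paragraph you claim that $\alpha_{\gamma'}(g(\gamma'))=0$ for \emph{every} $g\in{\rm SE}(G_{\triangle})$ and every $\gamma'\in\bar{\Gamma}_{\triangle}(G_{\triangle})$, so that $\sum_{\gamma'\notin\bar{\Gamma}_{\triangle}(G_{\triangle})}\alpha_{\gamma'}(g(\gamma'))=c$ for all $g$. Note first that an element of $\bar{\Gamma}_{\triangle}(G_{\triangle})$ contains all three edges of $\triangle$, so $\triangle$ is an entire component of $\gamma'$ (a cycle containing a $3$-cycle as a subgraph is that $3$-cycle); your phrase about ``the sub-path of $\gamma'$ running through the two triangle edges'' suggests a misreading of the definition. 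More importantly, for a general embedding $g$ the component $g(\triangle)$ may be knotted, or linked with the other components of $g(\gamma')$, and then no compressing disk exists: for instance, with $G_{\triangle}=K_{6}$, $\gamma'=\triangle\sqcup\triangle'$ and $\alpha_{\gamma'}=-{\rm lk}^{2}$ (which is compressible), an embedding with $g(\gamma')$ a Hopf link gives $\alpha_{\gamma'}(g(\gamma'))=-1\neq 0$. So your first displayed equation does not hold for all $g$; it holds precisely for those $g$ in which $g(\triangle)$ bounds a disk meeting $g(G_{\triangle})$ only in its boundary. That is exactly the property your constructed $g_{f}$ has, and your final paragraph does verify it for $g_{f}$ (the triangle bounds a disk in $B$, and the other components of $g_{f}(\gamma')$ miss $B$). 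The proof therefore goes through once the vanishing claim is restricted to $g=g_{f}$; you should delete the general claim rather than try to justify it.
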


By an application of Theorem \ref{main2}, the following is shown. 

\begin{Theorem}\label{main3} 
{\rm (Nikkuni-Taniyama \cite{NT12})} 
Let $G$ be an element in the Petersen family. Then, there exists a map $\omega_{G}$ from $\Gamma(G)$ to ${\mathbb Z}$ such that for any element $f$ in ${\rm SE}(G)$, it follows that 
\begin{eqnarray*}
2\sum_{\gamma\in \Gamma(G)}\omega_{G}(\gamma)a_{2}(f(\gamma))
=
\left(
\sum_{\gamma\in \Gamma^{(2)}(G)}{\rm lk}(f(\gamma))^{2}
\right)
-1.
\end{eqnarray*}
\end{Theorem}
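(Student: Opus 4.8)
The plan is to establish the displayed identity for every graph in the Petersen family at once, by propagating it along the $\triangle Y$-exchanges indicated by the arrows of Fig.~\ref{Petersen} and using Theorem~\ref{main2} as the transfer mechanism; the cases $K_{6}$, $Q_{7}$ and $P_{7}$ serve as base cases and are taken from \cite{N09b}, \cite{NT12} and \cite{D10} respectively.

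First I would recast the assertion in the shape demanded by Theorem~\ref{main2}. For a graph $G$ in the family and an element $\gamma'$ of $\bar{\Gamma}(G)$, put $\alpha^{G}_{\gamma'}(L)=2\omega_{G}(\gamma')\,a_{2}(L)$ if $\gamma'$ is a single cycle, $\alpha^{G}_{\gamma'}(L)=-{\rm lk}(L)^{2}$ if $\gamma'$ is a disjoint union of two cycles, and $\alpha^{G}_{\gamma'}=0$ otherwise. Each $\alpha^{G}_{\gamma'}$ is a compressible unoriented link invariant, because $a_{2}$ of the unknot and ${\rm lk}$ of a split $2$-component link both vanish; in particular the compressibility hypothesis on the members of $\bar{\Gamma}_{\triangle}(G_{\triangle})$ in Theorem~\ref{main2} is satisfied. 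The identity to be proved for $G$ is precisely $\sum_{\gamma'\in\bar{\Gamma}(G)}\alpha^{G}_{\gamma'}(f(\gamma'))=-1$ for every $f\in{\rm SE}(G)$. Hence, if $G_{Y}$ is obtained from $G_{\triangle}$ by a single $\triangle Y$-exchange and the identity holds for $G_{\triangle}$, then Theorem~\ref{main2} yields $\sum_{\gamma\in\bar{\Gamma}(G_{Y})}\tilde{\alpha}_{\gamma}(f(\gamma))=-1$ for every $f\in{\rm SE}(G_{Y})$, where $\tilde{\alpha}_{\gamma}(L)=\sum_{\gamma'\in\bar{\Phi}^{-1}(\gamma)}\alpha^{G_{\triangle}}_{\gamma'}(L)$.

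The substance of the proof is then, for each arrow of Fig.~\ref{Petersen}, to identify $\tilde{\alpha}_{\gamma}$ with $\alpha^{G_{Y}}_{\gamma}$, which splits into two tasks. For $\gamma\in\Gamma^{(2)}(G_{Y})$ one must show that $\bar{\Phi}^{-1}(\gamma)$ is a single element of $\Gamma^{(2)}(G_{Y})$'s preimage in $\Gamma^{(2)}(G_{\triangle})$; by the description of the fibres of $\bar{\Phi}$ recalled before Theorem~\ref{main2}, this reduces to checking that every $2$-component link of $G_{Y}$ that meets $x$ contains all of $u$, $v$, $w$, which follows from the explicit structure of the graphs in the family (for instance, the $2$-component links of the Petersen graph are the six complementary pairs of $5$-cycles, each of which spans all ten vertices). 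Thus $\tilde{\alpha}_{\gamma}(L)=-{\rm lk}(L)^{2}$, and the linking-number-squared side transfers with every coefficient equal to $1$. For $\gamma\in\Gamma(G_{Y})$ one computes $\tilde{\omega}_{G_{Y}}(\gamma)=\sum_{\gamma'}\omega_{G_{\triangle}}(\gamma')$, the sum over the single-cycle preimages of $\gamma$, from the elementary combinatorics of $\bar{\Phi}$ on cycles: if $\gamma$ avoids the new vertex $x$, then $\gamma$ is its own unique preimage; if $\gamma$ passes through $x$ with $x$-neighbours $u$, $v$ and also contains $w$, then its unique preimage is obtained by replacing the path $u$-$x$-$v$ with the edge $uv$; and if such a $\gamma$ does not contain $w$, then it has exactly two preimages, one replacing $u$-$x$-$v$ with $uv$ and one replacing it with the path $u$-$w$-$v$. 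Feeding the length-and-incidence description of $\omega_{G_{\triangle}}$ into these three cases and comparing with the stated formula for $\omega_{G_{Y}}$ finishes the step. It is here that the normalisations in the statement pull their weight: the value $-1$ on $\Gamma_{5}$ is exactly what makes a collapsing $5$-cycle and a re-routed $6$-cycle cancel, and two preimages of weight $-1$ are what create the coefficient $-2$ occurring for $P_{7}$, $P_{8}$, $P_{9}$ and $P_{10}$.

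The main obstacle is the second task above: there is no real shortcut past running through each $\triangle Y$-exchange in the family and each incidence class of cycles of $G_{Y}$, matching the labelled subfamilies $\Gamma^{i}_{k}(G_{Y})$ in the statement against the $\bar{\Phi}$-images of the corresponding subfamilies of $G_{\triangle}$, while keeping track of which distinguished vertex of $G_{Y}$ (the $A$, the $w$, the pair $\{x,x'\}$, the triple $\{A,v,v'\}$, \dots) plays the role of the re-routing vertex. A subsidiary point, again settled graph by graph from the pictures in Fig.~\ref{Petersen}, is to confirm that $\bar{\Phi}$ is surjective with precisely the fibre sizes used above, so that no cycle of $G_{Y}$ is omitted or double-counted. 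Once all of this is in place, the identity for $K_{6}$, $Q_{7}$ and $P_{7}$ propagates along Fig.~\ref{Petersen} to $Q_{8}$, $P_{8}$, $P_{9}$ and $P_{10}$, completing the proof.
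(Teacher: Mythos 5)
Your proposal is correct and follows essentially the same route as the paper: encode both sides as compressible link invariants $\alpha_{\gamma'}$, transfer the constant $-1$ along each $\triangle Y$-exchange via Theorem~\ref{main2}, observe that the fibre of $\bar{\Phi}$ over any $\gamma\in\Gamma^{(2)}(G_{Y})$ is a singleton because a disjoint pair of cycles in a Petersen-family graph uses all vertices, and define $\tilde{\omega}(\gamma)$ as the sum of $\omega$ over the single-cycle preimages, starting from the known cases of $K_{6}$ and $P_{7}$. The only difference is that you also undertake to match $\tilde{\omega}$ against the explicit formulas for $\omega_{G_{Y}}$, which is not needed for this existence statement (that verification is the content of Theorem~\ref{petersen_refine} and is carried out in Section~3).
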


We give a proof of Theorem \ref{main3} for the reader's convenience. 

\begin{proof}[Proof of Theorem \ref{main3}]
Let $G_{\triangle}$ and $G_{Y}$ be two elements in the Petersen family such that $G_{Y}$ is obtained from $G_{\triangle}$ by a single $\triangle Y$-exchange. Assume that there exists a map $\omega$ from $\bar{\Gamma}(G_{\triangle})$ to ${\mathbb Z}$ such that for any element $g$ in ${\rm SE}(G_{\triangle})$, it follows that  
\begin{eqnarray}\label{assump}
2\sum_{\gamma'\in \Gamma(G_{\triangle})}\omega(\gamma')a_{2}(g(\gamma'))
=
\left(
\sum_{\gamma'\in \Gamma^{(2)}(G_{\triangle})}{\rm lk}(g(\gamma'))^{2}
\right)
-1.
\end{eqnarray}
For each element $\gamma'$ in $\bar{\Gamma}(G_{\triangle})$, we define an integer-valued unoriented link invariant $\alpha_{\gamma'}$ of an unoriented link $L$ as follows. Note that $G_{\triangle}$ is obtained from $K_{6}$ or $P_{7}$ by a finite sequence of $\triangle Y$-exchanges. Since both $\Gamma^{(n)}(K_{6})$ and $\Gamma^{(n)}(P_{7})$ are empty sets for $n\ge 3$, we have $\Gamma^{(n)}(G_{\triangle})$ is an empty set for $n\ge 3$. If $\gamma'$ is an element in $\Gamma(G_{\triangle})$, then $\alpha_{\gamma'}(L)=2\omega(\gamma')a_{2}(L)$ if $L$ is a knot and $0$ if $L$ is not a knot. If $\gamma'$ is an element in $\Gamma^{(2)}(G_{\triangle})$, then $\alpha_{\gamma'}(L)=-{\rm lk}(L)^{2}$ if $L$ is a $2$-component link and $0$ if $L$ is not a $2$-component link. Then by (\ref{assump}), we have 
\begin{eqnarray}\label{const_k6}
\sum_{\gamma'\in \bar{\Gamma}(G_{\triangle})}\alpha_{\gamma'}(g(\gamma'))=-1. 
\end{eqnarray}
Note that $\alpha_{\gamma'}$ is compressible for any element $\gamma'$ in $\bar{\Gamma}(G_{\triangle})$. Thus by Theorem \ref{main2} and (\ref{const_k6}), for any element $f$ in ${\rm SE}(G_{Y})$, it follows that  
\begin{eqnarray}\label{const_q7}
\sum_{\gamma\in \bar{\Gamma}(G_{Y})}\tilde{\alpha}_{\gamma}(f(\gamma))=-1.
\end{eqnarray}
Now we define a map $\tilde{\omega}$ from ${\Gamma}(G_{Y})$ to ${\mathbb Z}$ by 
\begin{eqnarray}\label{c}
\tilde{\omega}(\gamma)
=
\sum_{\gamma'\in \bar{\Phi}^{-1}(\gamma)}\omega(\gamma')
\end{eqnarray}
for an element $\gamma$ in ${\Gamma}(G_{Y})$. Let $\gamma$ be an element in $\bar{\Gamma}(G_{Y})$. Note that $\Gamma^{(n)}(G_{Y})$ is also an empty set for $n\ge 3$. If $\gamma$ belongs to $\Gamma(G_{Y})$, then by (\ref{c}), we have 
\begin{eqnarray}\label{omega}
\tilde{\alpha}_{\gamma}(f(\gamma))
=
2\sum_{\gamma'\in {\Phi}^{-1}(\gamma)}\omega(\gamma')a_{2}(f(\gamma))
=
2\tilde{\omega}(\gamma)a_{2}(f(\gamma)). 
\end{eqnarray}
If $\gamma$ belongs to $\Gamma^{(2)}(G_{Y})$, then $\bar{\Phi}^{-1}(\gamma)$ consists of exactly one element because each union of two mutually disjoint cycles of a graph in the Petersen family contains all of the vertices of the graph. Then we have 
\begin{eqnarray}\label{xi}
\tilde{\alpha}_{\gamma}(f(\gamma))
=
\alpha_{\bar{\Phi}^{-1}(\gamma)}(f(\gamma))
=
-{\rm lk}(f(\gamma))^{2}. 
\end{eqnarray}
Thus by combining (\ref{const_q7}), (\ref{omega}) and (\ref{xi}), we have 
\begin{eqnarray}\label{a}
2\sum_{\gamma\in \Gamma(G_{Y})}\tilde{\omega}(\gamma)a_{2}(f(\gamma))
-\sum_{\gamma\in \Gamma^{(2)}(G_{Y})}{\rm lk}(f(\gamma))^{2}=-1. 
\end{eqnarray}
Namely, for any element $f$ in ${\rm SE}(G_{Y})$, it follows that 
\begin{eqnarray*}
2\sum_{\gamma\in \Gamma(G_{Y})}\tilde{\omega}(\gamma)a_{2}(f(\gamma))
=
\left(
\sum_{\gamma\in \Gamma^{(2)}(G_{Y})}{\rm lk}(f(\gamma))^{2}
\right)
-1.
\end{eqnarray*}
As we remarked before, the cases of $K_{6}$ and $P_{7}$ have already shown by \cite{N09b} and \cite{D10}, respectively. Thus by repeating the argument as above, we have the desired conclusion. 
\end{proof}

In the following, we show two lemmas which are useful in proving Theorem \ref{petersen_refine}. We say that two cycles of a graph are {\it edge-disjoint} if the intersection of them does not contain an edge. Let $G$ be a graph and $\triangle_{1},\triangle_{2},\ldots,\triangle_{k}$ $3$-cycles of $G$ such that $\triangle_{i} \cap \triangle_{j}$ is edge-disjoint for $i\neq j$. Then we also can regard $\triangle_{i}$ as a $3$-cycle of the graph obtained from $G$ by a finite sequence of $\triangle Y$-exchanges at $\triangle_{j}$'s for $i\neq j$. Let $G_{l}$ be a graph obtained from $G_{l-1}$ by a single $\triangle Y$-exchange at $\triangle_{l}$, where $G=G_{0}$ ($l=1,2,\ldots,k$). On the other hand, let $\sigma$ be a permutation of order $k$. Let $G'_{l}$ be a graph obtained from $G'_{l-1}$ by a single $\triangle Y$-exchange at $\triangle_{\sigma(l)}$, where $G=G'_{0}$ ($l=1,2,\ldots,k$). Note that $G_{k}=G'_{k}$. 

\begin{Lemma}\label{inverse} 
For any element $\gamma$ in $\bar{\Gamma}(G_{k})$, it follows that 
\begin{eqnarray*}
&&(\bar{\Phi}_{G_{k-1},G_{k}}
\circ \bar{\Phi}_{G_{k-2},G_{k-1}}
\circ \cdots 
\circ \bar{\Phi}_{G_{0},G_{1}}
)^{-1}(\gamma)\\
&=&
(\bar{\Phi}_{G'_{k-1},G'_{k}}
\circ \bar{\Phi}_{G'_{k-2},G'_{k-1}}
\circ \cdots 
\circ \bar{\Phi}_{G'_{0},G'_{1}})^{-1}
(\gamma).
\end{eqnarray*}
\end{Lemma}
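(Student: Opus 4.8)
The plan is to reduce the claim to a purely combinatorial statement about which cycles of $G_k$ have preimages of size one versus size two, and then to check that this data does not depend on the order in which the $\triangle Y$-exchanges at $\triangle_1,\dots,\triangle_k$ are performed. By induction on $k$ it suffices to treat the case $k=2$: given two $3$-cycles $\triangle_1=[u_1v_1w_1]$ and $\triangle_2=[u_2v_2w_2]$ of $G$ that are pairwise edge-disjoint, I want to show that for every $\gamma\in\bar\Gamma(G_2)$ the composite preimage is the same whether we exchange $\triangle_1$ then $\triangle_2$ or $\triangle_2$ then $\triangle_1$. Indeed, for larger $k$ one transposes adjacent exchanges in the sequence using the $k=2$ case (any two permutations differ by a sequence of adjacent transpositions), noting that an exchange at $\triangle_i$ with $i\neq l,l+1$ is unaffected by swapping the $l$-th and $(l+1)$-st steps since those steps only alter edges of $\triangle_l$ and $\triangle_{l+1}$, which are edge-disjoint from $\triangle_i$.

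First I would set up the $k=2$ case concretely. Write $Y_i=u_ix_i\cup v_ix_i\cup w_ix_i$ for the tripod replacing $\triangle_i$. Because $\triangle_1$ and $\triangle_2$ are edge-disjoint, the graph $G_2$ is obtained from $G$ by deleting the (at most six, possibly overlapping at a shared vertex) edges of $\triangle_1\cup\triangle_2$ and adjoining $Y_1$ and $Y_2$; this description is manifestly symmetric in the indices $1,2$, which is the underlying reason the lemma is true. Now take $\gamma\in\bar\Gamma(G_2)$ and trace through the two composites $\bar\Phi_{G_1,G_2}\circ\bar\Phi_{G_0,G_1}$ and $\bar\Phi_{G_1',G_2'}\circ\bar\Phi_{G_0',G_1'}$. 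Recall from the discussion preceding the lemma that $\bar\Phi_{G_\triangle,G_Y}^{-1}(\gamma)$ is determined by $\gamma\setminus Y$: one takes $\gamma\setminus Y$ and completes it back to an element of $\bar\Gamma(G_\triangle)$ using edges of $\triangle$, and the number of ways to do this is one if $\gamma$ meets $Y$ in all three edges or in none of them, and two otherwise (corresponding to the two ways of routing a path through the two non-$x$ edges of $\triangle$ versus the one edge not incident to $x$). The key point is that this "completion" operation at $\triangle_1$ only touches edges in $\triangle_1\cup Y_1$ and leaves everything in $\triangle_2\cup Y_2$ untouched, and vice versa.

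The substantive step is then the following bookkeeping claim: for $\gamma'\in\bar\Gamma(G)$, one has $\gamma'\in(\bar\Phi_{G_1,G_2}\circ\bar\Phi_{G_0,G_1})^{-1}(\gamma)$ if and only if $\gamma'\setminus(\triangle_1\cup\triangle_2)=\gamma\setminus(Y_1\cup Y_2)$ together with the requirement that, for each $i$, the portion of $\gamma'$ inside $\triangle_i$ is a valid completion matching the portion of $\gamma$ inside $Y_i$. I would verify this by unwinding the definition of $\bar\Phi$ twice: after the first exchange (say at $\triangle_1$), the intermediate cycle $\gamma_1\in\bar\Gamma(G_1)$ is characterized by $\gamma_1\setminus Y_1=\gamma'\setminus\triangle_1$ and it still contains $\triangle_2$ as an honest $3$-cycle (here I use that $\bar\Phi_{G_0,G_1}$ is defined on cycles not containing $\triangle_1$ and preserves the structure away from $\triangle_1$); then the second exchange relates $\gamma_1$ to $\gamma$ via $\gamma\setminus Y_2=\gamma_1\setminus\triangle_2$. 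Composing these two local conditions gives exactly the symmetric characterization above, which does not refer to the order. The mild subtlety I would be careful about — and which I expect to be the main obstacle — is the case where $\triangle_1$ and $\triangle_2$ share a common vertex (edge-disjoint does not mean vertex-disjoint): then one must check that a cycle or disjoint-cycle union may pass through that shared vertex using one edge from each of $\triangle_1$ and $\triangle_2$, and that the two completion operations still commute at that vertex. A short case analysis at the shared vertex, using that after the exchanges the two new vertices $x_1,x_2$ are distinct and the shared old vertex now has reduced valence, resolves this; with that in hand the characterization is symmetric and the lemma follows.
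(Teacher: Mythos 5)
Your proof rests on exactly the observation the paper uses: $\triangle Y$-exchanges at edge-disjoint triangles modify disjoint sets of edges, so the induced operations on $\bar\Gamma$ commute. The paper's own proof is the forward version of yours and is much shorter: for $\gamma'$ in one composite preimage, edge-disjointness gives $\bar\Phi_{G_{k-1},G_k}\circ\cdots\circ\bar\Phi_{G_0,G_1}(\gamma')=\bar\Phi_{G'_{k-1},G'_k}\circ\cdots\circ\bar\Phi_{G'_0,G'_1}(\gamma')=\gamma$, so $\gamma'$ lies in the other preimage; no reduction to adjacent transpositions and no preimage bookkeeping are needed, because the forward replacement at $\triangle_i$ is determined by $\gamma'\cap\triangle_i$ alone and is insensitive to what happens at the other triangles. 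Two cautions about your write-up. First, your fiber-size statement is wrong: a constituent cycle can never use all three edges of $Y$ (the center $x$ would then have degree $3$ in the cycle), and the correct criterion, recorded in the paper just before the lemma, is that $\bar\Phi^{-1}(\gamma)$ is a singleton iff $\gamma\supset\{u,v,w,x\}$ or $\gamma\not\ni x$; this is harmless here since the lemma never uses fiber cardinalities. Second, and more importantly, the two local completion conditions in your bookkeeping claim are not independent when $\triangle_1$ and $\triangle_2$ share a vertex $p$ (which edge-disjointness permits, and which actually occurs in the paper's application): both completions may want to route through $p$, which a disjoint union of cycles forbids. Your characterization survives only because you quantify over $\gamma'\in\bar\Gamma(G)$, so that this global constraint --- itself symmetric in the two triangles --- is built in; making that explicit is precisely the ``short case analysis'' you defer, and it is the one place where your backward formulation is genuinely more delicate than the paper's forward one.
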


\begin{proof}
Let $\gamma'$ be an element in the inverse image of $\gamma$ by $\bar{\Phi}_{G_{k-1},G_{k}}
\circ \bar{\Phi}_{G_{k-2},G_{k-1}}
\circ \cdots 
\circ \bar{\Phi}_{G_{0},G_{1}}$. Since $\triangle_{i}\cap \triangle_{j}$ is edge-disjoint for $i\neq j$, we have 
\begin{eqnarray*}
\gamma
&=&\bar{\Phi}_{G_{k-1},G_{k}}
\circ \bar{\Phi}_{G_{k-2},G_{k-1}}
\circ \cdots 
\circ \bar{\Phi}_{G_{0},G_{1}}(\gamma')\\
&=& \bar{\Phi}_{G'_{k-1},G'_{k}}
\circ \bar{\Phi}_{G'_{k-2},G'_{k-1}}
\circ \cdots 
\circ \bar{\Phi}_{G'_{0},G'_{1}}(\gamma'). 
\end{eqnarray*}
Thus $\gamma'$ also be an element in the inverse image of $\gamma$ by 
$\bar{\Phi}_{G'_{k-1},G'_{k}}
\circ \bar{\Phi}_{G'_{k-2},G'_{k-1}}
\circ \cdots 
\circ \bar{\Phi}_{G'_{0},G'_{1}}$. 
This implies the result. 
\end{proof}

We assign an $A$-valued unoriented link invariant $\alpha_{\gamma'}$ for each element $\gamma'$ in $\bar{\Gamma}(G)$. Then for an element $\gamma$ in $\bar{\Gamma}(G_{l})$, we define an $A$-valued unoriented link invariant $\alpha_{\gamma}^{(l)}$ by $\alpha_{\gamma}^{(l)}=\alpha_{\gamma}$ if $l=0$ and $\alpha_{\gamma}^{(l)}=\widetilde{\alpha^{(l-1)}}_{\gamma}$ if $l=1,2,\ldots,k$ with respect to the sequence of $\triangle Y$-exchanges at $\triangle_{1},\triangle_{2},\ldots,\triangle_{k}$. On the other hand, we define an $A$-valued unoriented link invariant $\beta_{\gamma}^{(l)}$ by $\beta_{\gamma}^{(l)}=\alpha_{\gamma}$ if $l=0$ and $\beta_{\gamma}^{(l)}=\widetilde{\beta^{(l-1)}}_{\gamma}$ if $l=1,2,\ldots,k$ with respect to the sequence of $\triangle Y$-exchanges at $\triangle_{\sigma(1)},\triangle_{\sigma(2)},\ldots,\triangle_{\sigma(k)}$. Then we have the following. 

\begin{Lemma}\label{ab} 
For any element $\gamma$ in $\bar{\Gamma}(G_{k})$, it follows that $\alpha_{\gamma}^{(k)}=\beta_{\gamma}^{(k)}$. 
\end{Lemma}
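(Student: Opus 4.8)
The goal is to show that the link invariant $\alpha^{(k)}_\gamma$ attached to a cycle $\gamma$ of $G_k=G'_k$ after performing the $\triangle Y$-exchanges in the order $\triangle_1,\dots,\triangle_k$ agrees with the invariant $\beta^{(k)}_\gamma$ obtained by performing them in the order $\triangle_{\sigma(1)},\dots,\triangle_{\sigma(k)}$. The natural strategy is to unwind both definitions down to the original invariants $\alpha_{\gamma'}$ on $\bar\Gamma(G)$ and observe that both sides are the same finite sum, the index set of which is governed by an iterated inverse image of the form appearing in Lemma \ref{inverse}. Concretely, first I would prove by induction on $l$ that for each $\gamma$ in $\bar\Gamma(G_l)$ one has
\begin{eqnarray*}
\alpha^{(l)}_\gamma
=\sum_{\gamma'\in(\bar\Phi_{G_{l-1},G_l}\circ\cdots\circ\bar\Phi_{G_0,G_1})^{-1}(\gamma)}\alpha_{\gamma'}.
\end{eqnarray*}
The base case $l=0$ is immediate. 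For the inductive step one uses the defining relation $\alpha^{(l)}_\gamma=\widetilde{\alpha^{(l-1)}}_\gamma=\sum_{\eta\in\bar\Phi_{G_{l-1},G_l}^{-1}(\gamma)}\alpha^{(l-1)}_\eta$ together with the inductive hypothesis applied to each $\eta$, and then one rewrites the resulting double sum as a single sum over the composite inverse image, using that $\bar\Phi^{-1}$ of a composite is the composite of the $\bar\Phi^{-1}$'s in the reverse order.

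Once this formula is established, the same computation applied to the permuted sequence gives
\begin{eqnarray*}
\beta^{(k)}_\gamma
=\sum_{\gamma'\in(\bar\Phi_{G'_{k-1},G'_k}\circ\cdots\circ\bar\Phi_{G'_0,G'_1})^{-1}(\gamma)}\alpha_{\gamma'}.
\end{eqnarray*}
Now I would invoke Lemma \ref{inverse}, which asserts precisely that the two iterated inverse images
$(\bar\Phi_{G_{k-1},G_k}\circ\cdots\circ\bar\Phi_{G_0,G_1})^{-1}(\gamma)$ and
$(\bar\Phi_{G'_{k-1},G'_k}\circ\cdots\circ\bar\Phi_{G'_0,G'_1})^{-1}(\gamma)$ coincide as subsets of $\bar\Gamma(G)$, since the $3$-cycles $\triangle_i$ are pairwise edge-disjoint. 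Therefore the two sums above are indexed by the same set and have the same summands, so $\alpha^{(k)}_\gamma=\beta^{(k)}_\gamma$, as desired.

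\textbf{Main obstacle.} The one point that requires genuine care, rather than bookkeeping, is the inductive step of the first claim: one must check that the surjections $\bar\Phi_{G_{l-1},G_l}$ are genuinely composable in the sense needed and that the iterated inverse-image set is well defined independently of how one groups the compositions. This hinges on the fact, recorded in the discussion around \eqref{phi}, that an element of $\bar\Gamma(G_{l-1})$ lying in $\bar\Phi^{-1}(\gamma)$ is automatically one that does not contain the relevant triangle $\triangle_l$ (so the domain restriction is respected), and on the edge-disjointness hypothesis ensuring that the triangles being exchanged at later stages survive as triangles of the intermediate graphs. Apart from verifying that these compatibility conditions propagate correctly through the induction, the argument is purely formal once Lemma \ref{inverse} is in hand.
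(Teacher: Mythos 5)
Your proposal is correct and follows essentially the same route as the paper: the paper's proof also rewrites $\alpha_{\gamma}^{(k)}(L)$ as a single sum of $\alpha_{\gamma'}(L)$ over the iterated inverse image $(\bar{\Phi}_{G_{k-1},G_{k}}\circ\cdots\circ\bar{\Phi}_{G_{0},G_{1}})^{-1}(\gamma)$, does the same for $\beta_{\gamma}^{(k)}(L)$, and concludes by Lemma \ref{inverse}. The only difference is that you make the inductive unwinding of the recursive definition explicit, which the paper leaves implicit.
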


\begin{proof}
Let $\gamma$ be an element in $\bar{\Gamma}(G_{k})$. Then for an unoriented link $L$, by Lemma \ref{inverse}, we have 
\begin{eqnarray*}
\alpha_{\gamma}^{(k)}(L)
&=& \sum_{\gamma'\in (\bar{\Phi}_{G_{k-1},G_{k}}
\circ \bar{\Phi}_{G_{k-2},G_{k-1}}
\circ \cdots 
\circ \bar{\Phi}_{G_{0},G_{1}})^{-1}(\gamma)}
\alpha_{\gamma'}(L)\\
&=& \sum_{\gamma'\in (\bar{\Phi}_{G'_{k-1},G'_{k}}
\circ \bar{\Phi}_{G'_{k-2},G'_{k-1}}
\circ \cdots 
\circ \bar{\Phi}_{G'_{0},G'_{1}})^{-1}(\gamma)}
\alpha_{\gamma'}(L)\\
&=& \beta_{\gamma}^{(k)}(L). 
\end{eqnarray*}
Thus we have the result. 
\end{proof}

\section{Proof of Theorem \ref{petersen_refine}} 

Let $G$ be a graph and $T=\{\triangle_{1},\triangle_{2},\ldots,\triangle_{k}\}$ a set of mutually edge-disjoint $3$-cycles of $G$. We say that $T$ is {\it stable} if for any $l$-element subset $\{\triangle_{i_{1}},\triangle_{i_{2}},\ldots,\triangle_{i_{l}}\}$ of $T$ ($1\le l< k$), $\triangle Y$-exchanges at $\triangle_{i_{1}},\triangle_{i_{2}},\ldots,\triangle_{i_{l}}$ produce the same graph up to isomorphism. 

\begin{proof}[Proof of Theorem \ref{petersen_refine}] 
We denote the $3$-cycles $[uvw]$, $[u'v'w]$, $[uu'A]$, $[vv'A]$ and $[u'v'A]$ of $K_{6}$ by $\triangle_{1}$, $\triangle_{2}$, $\triangle_{3}$, $\triangle_{4}$ and $\triangle_{5}$, respectively. Note that $\triangle_{i}\cap \triangle_{j}$ is edge-disjoint for $1\le i<j\le 4$, and $\triangle_{1}\cap \triangle_{5}$ is also edge-disjoint. 

(1) Let $G$ be $K_{6}$. Then this case has been shown in \cite{N09b}. 

(2) Let $G$ be $Q_{7}$ which is obtained from $K_{6}$ by a single $\triangle Y$-exchange at $\triangle_{1}$. Though this case has been shown in \cite{NT12}, we give it again for the reader's convenience. Let $\omega_{K_{6}}$ be the map from ${\Gamma}(K_{6})$ to ${\mathbb Z}$ as in (1) and $\tilde{\omega}_{K_{6}}$ the map from ${\Gamma}(Q_{7})$ to ${\mathbb Z}$ defined by (\ref{c}) with respect to the $\triangle Y$-exchange at $\triangle_{1}$. In the following we show $\tilde{\omega}_{K_{6}}=\omega_{Q_{7}}$. Then the result follows from the proof of Theorem \ref{main3}. Let $\gamma$ be an element in $\Gamma(Q_{7})$. If $\gamma$ belongs to $\Gamma_{7}(Q_{7})$, then there uniquely exists an element $\gamma'$ in $\Gamma_{6}(K_{6})$ such that $\bar{\Phi}^{-1}_{K_{6},Q_{7}}(\gamma)=\{\gamma'\}$. Thus we have $\tilde{\omega}_{K_{6}}(\gamma)=\omega_{K_{6}}(\gamma')=1$. If $\gamma$ belongs to $\Gamma_{6}(Q_{7})$, then $\gamma$ does not contain exactly one of the vertices of $Q_{7}$. Then it is sufficient to consider the following three cases up to symmetry of $Q_{7}$: (i) If $\gamma$ does not contain $x$, then there uniquely exists an element $\gamma'$ in $\Gamma_{6}(K_{6})$ such that $\bar{\Phi}^{-1}_{K_{6},Q_{7}}(\gamma)=\{\gamma'\}$. Thus we have $\tilde{\omega}_{K_{6}}(\gamma)=\omega_{K_{6}}(\gamma')=1$. (ii) If $\gamma$ does not contain $u$, then there exists an element $\gamma'_{1}$ in $\Gamma_{5}(K_{6})$ and an element $\gamma'_{2}$ in $\Gamma_{6}(K_{6})$ such that $\bar{\Phi}^{-1}_{K_{6},Q_{7}}(\gamma)=\{\gamma'_{1},\gamma'_{2}\}$. Thus we have $\tilde{\omega}_{K_{6}}(\gamma)=\omega_{K_{6}}(\gamma'_{1})+\omega_{K_{6}}(\gamma'_{2})=-1+1=0$. (iii) If $\gamma$ does not contain $u'$, then there uniquely exists an element $\gamma'$ in $\Gamma_{5}(K_{6})$ such that $\bar{\Phi}^{-1}_{K_{6},Q_{7}}(\gamma)=\{\gamma'\}$. Thus we have $\tilde{\omega}_{K_{6}}(\gamma)=\omega_{K_{6}}(\gamma')=-1$. If $\gamma$ belongs to $\Gamma_{5}(Q_{7})$, then the inverse image of $\gamma$ by $\bar{\Phi}$ consists of exactly one element in $\Gamma_{5}(K_{6})$ or a pair of an element in $\Gamma_{5}(K_{6})$ and an element in $\Gamma_{4}(K_{6})$. Thus in any case we have $\tilde{\omega}_{K_{6}}(\gamma)=-1$. 
If $\gamma$ belongs to $\Gamma(Q_{7})\setminus \cup_{k=5}^{7}\Gamma_{k}(Q_{7})$, we have $\tilde{\omega}_{K_{6}}(\gamma)=0$. In conclusion, we see that 
\begin{eqnarray*}
\tilde{\omega}_{K_{6}}(\gamma)=\left\{
       \begin{array}{@{\,}ll}
       1 & \mbox{if $\gamma\in \Gamma_{7}(Q_{7})\cup \left\{\delta\in \Gamma_{6}(Q_{7})\ |\ \delta\not\ni x\right\}$}\\
       -1 & \mbox{if $\gamma\in \left\{\delta\in \Gamma_{6}(Q_{7})\ |\ \delta\ni x,u,v,w\right\}\cup \Gamma_{5}(Q_{7})$}\\
       0 & \mbox{otherwise}
       \end{array}
     \right.
\end{eqnarray*}
for an element $\gamma$ in $\Gamma(Q_{7})$. Thus it follows that $\tilde{\omega}_{K_{6}}=\omega_{Q_{7}}$.

(3) Let $G$ be $Q_{8}$ which is obtained from $Q_{7}$ by a single $\triangle Y$-exchange at $\triangle_{5}$. Let $\omega_{Q_{7}}$ be the map from ${\Gamma}(Q_{7})$ to ${\mathbb Z}$ as in (2) and $\tilde{\omega}_{Q_{7}}$ the map from ${\Gamma}(Q_{8})$ to ${\mathbb Z}$ defined by (\ref{c}) with respect to the $\triangle Y$-exchange at $\triangle_{5}$. In the following we show $\tilde{\omega}_{Q_{7}}=\omega_{Q_{8}}$. Let $\gamma$ be an element in $\Gamma(Q_{8})$. Note that $\Gamma_{k}(Q_{8})=\emptyset$ if $k\neq 4,6,8$. 
If $\gamma$ belongs to $\Gamma_{8}(Q_{8})$, then there uniquely exists an element $\gamma'$ in $\Gamma_{7}(Q_{7})$ such that $\bar{\Phi}^{-1}_{Q_{7},Q_{8}}(\gamma)=\{\gamma'\}$. Thus we have $\tilde{\omega}_{Q_{7}}(\gamma)=\omega_{Q_{7}}(\gamma')=1$. 
If $\gamma$ belongs to $\Gamma_{6}(Q_{8})$, then $\gamma$ does not contain exactly two of the vertices of $Q_{8}$. Then it is sufficient to consider the following three cases up to symmetry of $Q_{8}$: (i) If $\gamma$ does not contain either $x$ or $x'$, then there uniquely exists an element $\gamma'$ in $\Gamma_{6}(Q_{7})$ such that $\bar{\Phi}^{-1}_{Q_{7},Q_{8}}(\gamma)=\{\gamma'\}$. Since $\gamma'$ does not contain $x$, we have $\tilde{\omega}_{Q_{7}}(\gamma)=\omega_{Q_{7}}(\gamma')=1$. 
(ii) If $\gamma$ contains exactly one of $x$ and $x'$, then we may assume that $\gamma$ contains $x'$ by the following reason. Note that $\{\triangle_{1},\triangle_{5}\}$ is stable. Actually, the graph $Q'_{7}$ which is obtained from $K_{6}$ by a single $\triangle Y$-exchange at $\triangle_{5}$ is isomorphic to $Q_{7}$, and $Q_{8}$ is also obtained from $Q'_{7}$ by a single $\triangle Y$-exchange at $\triangle_{1}$, see Fig. \ref{Q8iso}. Then the map from $\Gamma(Q_{8})$ to ${\mathbb Z}$ obtained from $\omega_{K_{6}}$ by $\bar{\Phi}_{Q'_{7},Q_{8}}\circ\bar{\Phi}_{K_{6},Q'_{7}}$ coincides with $\tilde{\omega}_{Q_{7}}$ by (\ref{omega}) and Lemma \ref{ab}. Thus the case that $\gamma$ contains $x$ and the case that $\gamma$ contains $x'$ are compatible through the isomorphism between $Q'_{7}$ and $Q_{7}$. From now on, to simplify the description of the proof we often use the argument of such a compatibility without any notice. Now assume that $\gamma$ contains $x'$. Then there uniquely exists an element $\gamma'$ in $\Gamma_{5}(Q_{7})$ such that $\bar{\Phi}^{-1}_{Q_{7},Q_{8}}(\gamma)=\{\gamma'\}$. Thus we have $\tilde{\omega}_{Q_{7}}(\gamma)=\omega_{Q_{7}}(\gamma')=-1$. 
(iii) If $\gamma$ contains both $x$ and $x'$, then we may assume that $\gamma$ does not contain either $u$ or $u'$. Then there exists an element $\gamma'_{1}$ in $\Gamma_{5}(Q_{7})$ and an element $\gamma'_{2}$ in $\Gamma_{6}(Q_{7})$ such that $\bar{\Phi}^{-1}_{Q_{7},Q_{8}}(\gamma)=\{\gamma'_{1},\gamma'_{2}\}$. Since $\gamma'_{2}$ does not contain $u$, we have $\tilde{\omega}_{Q_{7}}(\gamma)=\omega_{Q_{7}}(\gamma'_{1})+\omega_{Q_{7}}(\gamma'_{2})=-1+0=-1$. 
If $\gamma$ belongs to $\Gamma_{4}(Q_{8})$, we have $\tilde{\omega}_{Q_{7}}(\gamma)=0$. In conclusion, we see that 
\begin{eqnarray*}
\tilde{\omega}_{Q_{7}}(\gamma)=\left\{
       \begin{array}{@{\,}ll}
       1 & \mbox{if $\gamma\in \Gamma_{8}(Q_{8})\cup \left\{\delta\in \Gamma_{6}(Q_{8})\ |\ \delta\cap \{x,x'\}= \emptyset\right\}$}\\
       -1 & \mbox{if $\gamma\in \left\{\delta\in \Gamma_{6}(Q_{8})\ |\ \delta\cap \{x,x'\}\neq \emptyset\right\}$}\\
       0 & \mbox{otherwise}
       \end{array}
     \right.
\end{eqnarray*}
for an element $\gamma$ in $\Gamma(Q_{8})$. Thus it follows that $\tilde{\omega}_{Q_{7}}=\omega_{Q_{8}}$.

\begin{figure}[htbp]
      \begin{center}
\scalebox{0.4}{\includegraphics*{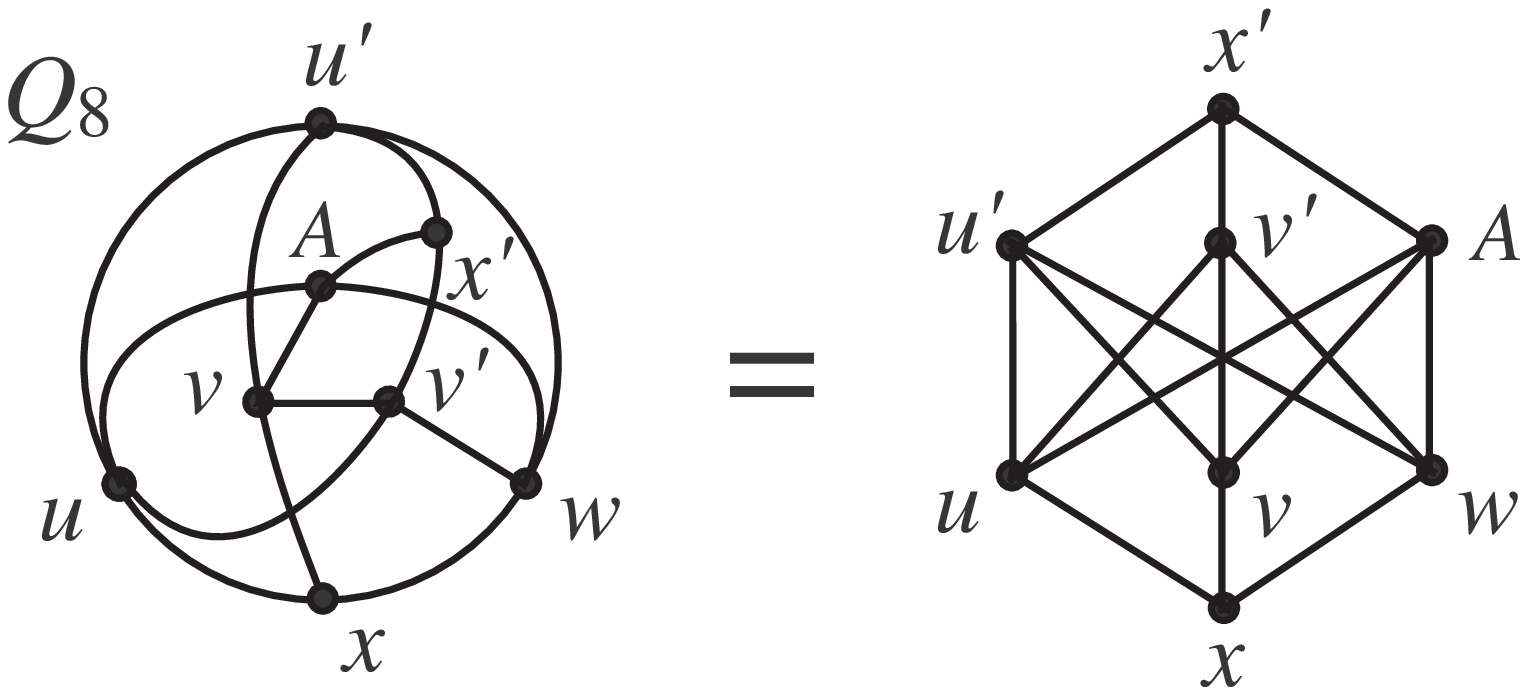}}
      \end{center}
   \caption{}
  \label{Q8iso}
\end{figure} 

(4) Let $G$ be $P_{7}$. Then this case has been shown in \cite{D10}.

(5) Let $G$ be $P_{8}$ which is obtained from $Q_{7}$ by a single $\triangle Y$-exchange at $\triangle_{2}$. Note that $\{\triangle_{1},\triangle_{2}\}$ is stable, see Fig. \ref{P8iso}. Let $\omega_{Q_{7}}$ be the map from ${\Gamma}(Q_{7})$ to ${\mathbb Z}$ as in (2) and $\tilde{\omega}_{Q_{7}}$ the map from ${\Gamma}(P_{8})$ to ${\mathbb Z}$ defined by (\ref{c}) with respect to the $\triangle Y$-exchange at $\triangle_{2}$. In the following we show $\tilde{\omega}_{Q_{7}}=\omega_{P_{8}}$. Let $\gamma$ be an element in $\Gamma(P_{8})$. 
If $\gamma$ belongs to $\Gamma_{8}(P_{8})$, then there uniquely exists an element $\gamma'$ in $\Gamma_{7}(Q_{7})$ such that $\bar{\Phi}^{-1}_{Q_{7},P_{8}}(\gamma)=\{\gamma'\}$. Thus we have $\tilde{\omega}_{Q_{7}}(\gamma)=\omega_{Q_{7}}(\gamma')=1$. 
If $\gamma$ belongs to $\Gamma_{7}(P_{8})$, then $\gamma$ does not contain exactly one of the vertices of $P_{8}$. Then it is sufficient to consider the following four cases up to symmetry of $P_{8}$: (i) If $\gamma$ does not contain one of $x$ and $y$, then we may assume that $\gamma$ contains $x$. Then there uniquely exists an element $\gamma'$ in $\Gamma_{7}(Q_{7})$ such that $\bar{\Phi}^{-1}_{Q_{7},P_{8}}(\gamma)=\{\gamma'\}$. Thus we have $\tilde{\omega}_{Q_{7}}(\gamma)=\omega_{Q_{7}}(\gamma')=1$. 
(ii) If $\gamma$ does not contain $w$, then there exists an element $\gamma'_{1}$ in $\Gamma_{6}(Q_{7})$ and an element $\gamma'_{2}$ in $\Gamma_{7}(Q_{7})$ such that $\bar{\Phi}^{-1}_{Q_{7},P_{8}}(\gamma)=\{\gamma'_{1},\gamma'_{2}\}$. Since $\gamma'_{1}$ also does not contain $w$, we have $\tilde{\omega}_{Q_{7}}(\gamma)=\omega_{Q_{7}}(\gamma'_{1})+\omega_{Q_{7}}(\gamma'_{2})=0+1=1$. 
(iii) If $\gamma$ does not contain one of $u,v,u'$ and $v'$ (in other words, $\gamma$ contains all of $x,y,w$ and $A$), then we may assume that $\gamma$ does not contain $u$. Then there uniquely exists an element $\gamma'$ in $\Gamma_{6}(Q_{7})$ such that $\bar{\Phi}^{-1}_{Q_{7},P_{8}}(\gamma)=\{\gamma'\}$. Since $\gamma'$ also does not contain $u$, we have $\tilde{\omega}_{Q_{7}}(\gamma)=\omega_{Q_{7}}(\gamma')=0$.
(iv) If $\gamma$ does not contain $A$, then there uniquely exists an element $\gamma'$ in $\Gamma_{6}(Q_{7})$ such that $\bar{\Phi}^{-1}_{Q_{7},P_{8}}(\gamma)=\{\gamma'\}$.  Since $\gamma'$ contains all of $u,v,w$ and $x$, we have $\tilde{\omega}_{Q_{7}}(\gamma)=\omega_{Q_{7}}(\gamma')=-1$.
If $\gamma$ belongs to $\Gamma_{6}(P_{8})$, then $\gamma$ does not contain exactly two of the vertices of $P_{8}$. Then it is sufficient to consider the following four cases up to symmetry of $P_{8}$: (i) If $\gamma$ contains $w$ and does not contain one of $x$ and $y$, then we may assume that $\gamma$ does not contain $y$. Then there uniquely exists an element $\gamma'$ in $\Gamma_{6}(Q_{7})$ such that $\bar{\Phi}^{-1}_{Q_{7},P_{8}}(\gamma)=\{\gamma'\}$.  Since $\gamma'$ contains all of $u,v,w$ and $x$, we have $\tilde{\omega}_{Q_{7}}(\gamma)=\omega_{Q_{7}}(\gamma')=-1$.
(ii) If $\gamma$ does not contain $w$ and one of $x$ and $y$, then we may assume that $\gamma$ does not contain $y$. Then there uniquely exists an element $\gamma'$ in $\Gamma_{6}(Q_{7})$ such that $\bar{\Phi}^{-1}_{Q_{7},P_{8}}(\gamma)=\{\gamma'\}$.  Since $\gamma'$ does not contain $w$, we have $\tilde{\omega}_{Q_{7}}(\gamma)=\omega_{Q_{7}}(\gamma')=0$.
(iii) If $\gamma$ does not contain either $w$ or $A$, then there exists an element $\gamma'_{1}$ in $\Gamma_{5}(Q_{7})$ and an element $\gamma'_{2}$ in $\Gamma_{6}(Q_{7})$ such that $\bar{\Phi}^{-1}_{Q_{7},P_{8}}(\gamma)=\{\gamma'_{1},\gamma'_{2}\}$. Since $\gamma'_{2}$ contains all of $u,v,w$ and $x$, we have $\tilde{\omega}_{Q_{7}}(\gamma)=\omega_{Q_{7}}(\gamma'_{1})+\omega_{Q_{7}}(\gamma'_{2})=-1-1=-2$. 
(iv) If $\gamma$ contains all of $x,y$ and $w$, then we may assume that $\gamma$ does not contain either $u$ or $u'$. Then there exists an element $\gamma'_{1}$ in $\Gamma_{5}(Q_{7})$ and an element $\gamma'_{2}$ in $\Gamma_{6}(Q_{7})$ such that $\bar{\Phi}^{-1}_{Q_{7},P_{8}}(\gamma)=\{\gamma'_{1},\gamma'_{2}\}$. Since $\gamma'_{2}$ does not contain $u$, we have $\tilde{\omega}_{Q_{7}}(\gamma)=\omega_{Q_{7}}(\gamma'_{1})+\omega_{Q_{7}}(\gamma'_{2})=-1+0=-1$. 
If $\gamma$ belongs to $\Gamma_{5}(P_{8})$, then we have $\tilde{\omega}_{Q_{7}}(\gamma)=-1$ in the same way as the case of $Q_{7}$. 
If $\gamma$ belongs to $\Gamma(P_{8})\setminus \cup_{k=5}^{8}\Gamma_{k}(P_{8})$, we have $\tilde{\omega}_{Q_{7}}(\gamma)=0$. 
In conclusion, we see that 
\begin{eqnarray*}
\tilde{\omega}_{Q_{7}}(\gamma)=\left\{
       \begin{array}{@{\,}ll}
       1 & \mbox{if $\gamma\in \Gamma_{8}(P_{8})\cup \left\{\delta\in \Gamma_{7}(P_{8})\ |\ \delta\not\supset \{x,y,w\}\right\}$}\\
       -1 & \mbox{if $\gamma\in \left\{\delta\in \Gamma_{7}(P_{8})\ |\ \delta\not\ni A\right\}\cup \left\{\delta\in \Gamma_{6}(P_{8})\ |\ \delta\ni w\right\}\cup \Gamma_{5}(P_{8})$}\\
       -2 & \mbox{if $\gamma\in \left\{\delta\in \Gamma_{6}(P_{8})\ |\ \delta\cap \{A,w\}=\emptyset\right\}$}\\
       0 & \mbox{otherwise}
       \end{array}
     \right.
\end{eqnarray*}
for an element $\gamma$ in $\Gamma(P_{8})$. Thus it follows that $\tilde{\omega}_{Q_{7}}=\omega_{P_{8}}$. 

\begin{figure}[htbp]
      \begin{center}
\scalebox{0.4}{\includegraphics*{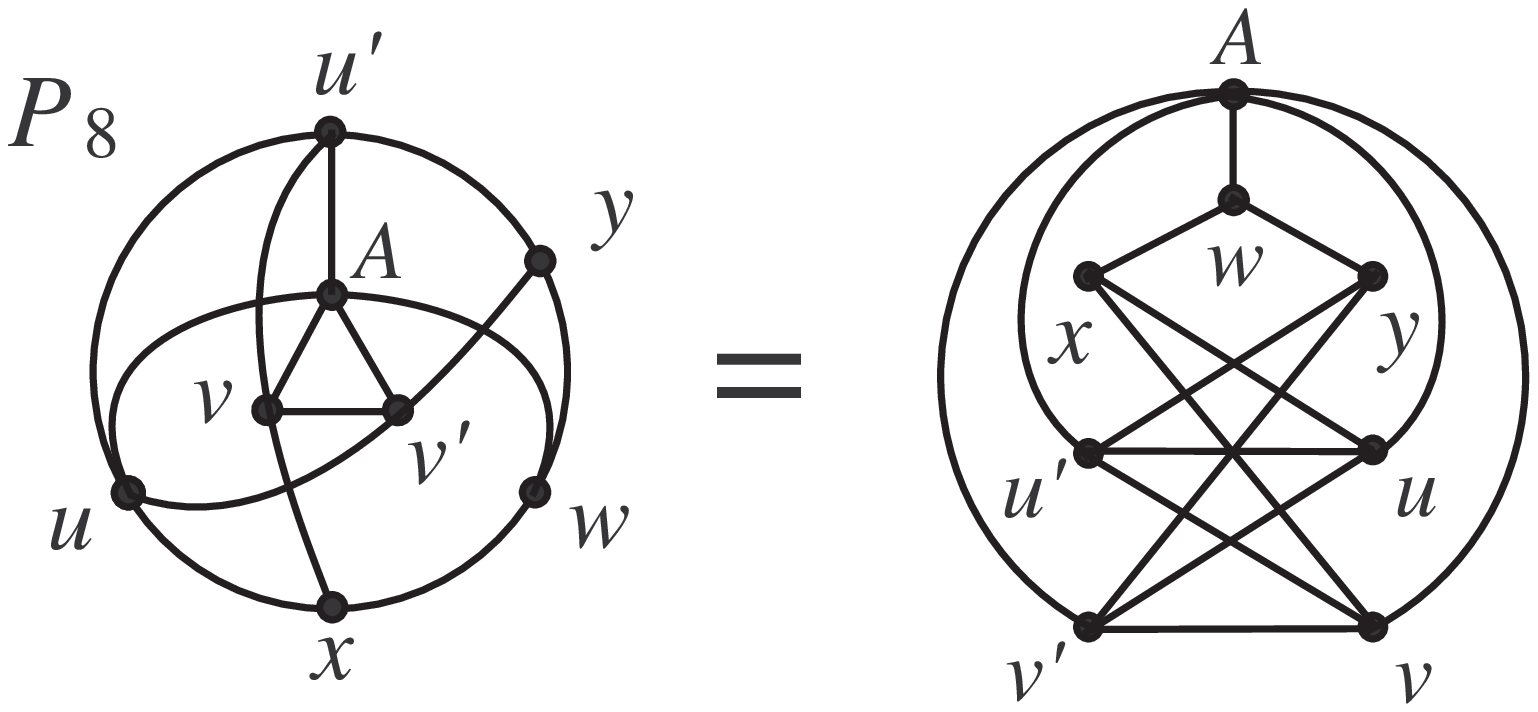}}
      \end{center}
   \caption{}
  \label{P8iso}
\end{figure} 

(6) Let $G$ be $P_{9}$ which is obtained from $P_{8}$ by a single $\triangle Y$-exchange at $\triangle_{3}$. Note that $\{\triangle_{1},\triangle_{2},\triangle_{3}\}$ is stable. Let $\omega_{P_{8}}$ be the map from ${\Gamma}(P_{8})$ to ${\mathbb Z}$ as in (5) and $\tilde{\omega}_{P_{8}}$ the map from ${\Gamma}(P_{9})$ to ${\mathbb Z}$ defined by (\ref{c}) with respect to the $\triangle Y$-exchange at $\triangle_{3}$. In the following we show $\tilde{\omega}_{P_{8}}=\omega_{P_{9}}$. Let $\gamma$ be an element in $\Gamma(P_{9})$. 
If $\gamma$ belongs to $\Gamma_{9}(P_{9})$, then there uniquely exists an element $\gamma'$ in $\Gamma_{8}(P_{8})$ such that $\bar{\Phi}^{-1}_{P_{8},P_{9}}(\gamma)=\{\gamma'\}$. Thus we have $\tilde{\omega}_{P_{8}}(\gamma)=\omega_{P_{8}}(\gamma')=1$. 
If $\gamma$ belongs to $\Gamma_{8}(P_{9})$, then $\gamma$ does not contain exactly one of the vertices of $P_{9}$. Then it is sufficient to consider the following two cases up to symmetry of $P_{9}$: (i) If $\gamma$ contains all of $v,v'$ and $A$, then we may assume that $\gamma$ does not contain one of $x$ and $w$. In any case, there uniquely exists an element $\gamma'$ in $\Gamma_{7}(P_{8})$ such that $\bar{\Phi}^{-1}_{P_{8},P_{9}}(\gamma)=\{\gamma'\}$. Since $\gamma'$ also does not contain one of $x$ and $w$, we have $\tilde{\omega}_{P_{8}}(\gamma)=\omega_{P_{8}}(\gamma')=1$. (ii) If $\gamma$ does not contain one of $v,v'$ and $A$, then we may assume that $\gamma$ does not contain $v'$. Then there uniquely exists an element $\gamma'$ in $\Gamma_{7}(P_{8})$ such that $\bar{\Phi}^{-1}_{P_{8},P_{9}}(\gamma)=\{\gamma'\}$. Since $\gamma'$ contains all of $x,y$ and $w$, we have $\tilde{\omega}_{P_{8}}(\gamma)=\omega_{P_{8}}(\gamma')=0$. 
If $\gamma$ belongs to $\Gamma_{7}(P_{9})$, then $\gamma$ does not contain exactly two of the vertices of $P_{9}$. Then it is sufficient to consider the following two cases up to symmetry of $P_{9}$: (i) If $\gamma$ contains all of $v,v'$ and $A$, then we may assume that $\gamma$ does not contain either $x$ or $w$. Then there uniquely exists an element $\gamma'$ in $\Gamma_{6}(P_{8})$ such that $\bar{\Phi}^{-1}_{P_{8},P_{9}}(\gamma)=\{\gamma'\}$. Since $\gamma'$ contains $A$ and does not contain $w$, we have $\tilde{\omega}_{P_{8}}(\gamma)=\omega_{P_{8}}(\gamma')=0$. 
(ii) If $\gamma$ does not contain one of $v,v'$ and $A$, then we may assume that $\gamma$ does not contain either $x$ or $v$, or $\gamma$ does not contain either $u'$ or $v$. In the former case, there uniquely exists an element $\gamma'$ in $\Gamma_{6}(P_{8})$ such that $\bar{\Phi}^{-1}_{P_{8},P_{9}}(\gamma)=\{\gamma'\}$. Since $\gamma'$ contains $w$, we have $\tilde{\omega}_{P_{8}}(\gamma)=\omega_{P_{8}}(\gamma')=-1$. In the latter case, there exists an element $\gamma'_{1}$ in $\Gamma_{6}(P_{8})$ and an element $\gamma'_{2}$ in $\Gamma_{7}(P_{8})$ such that $\bar{\Phi}^{-1}_{P_{8},P_{9}}(\gamma)=\{\gamma'_{1},\gamma'_{2}\}$. Since $\gamma'_{1}$ contains $w$ and $\gamma'_{2}$ contains all of $x,y,w$ and $A$, we have $\tilde{\omega}_{P_{8}}(\gamma)=\omega_{P_{8}}(\gamma'_{1})+\omega_{P_{8}}(\gamma'_{2})=-1+0=-1$. 
If $\gamma$ belongs to $\Gamma_{6}(P_{9})$, then $\gamma$ does not contain exactly three of the vertices of $P_{9}$. Then it is sufficient to consider the following two cases up to symmetry of $P_{9}$: (i) If $\gamma$ contains all of $v,v'$ and $A$, then we may assume that $\gamma$ does not contain any of $x,y$ or $w$, or $\gamma$ does not contain any of $u,u'$ or $z$. In the former case, there uniquely exists an element $\gamma'$ in $\Gamma_{5}(P_{8})$ such that $\bar{\Phi}^{-1}_{P_{8},P_{9}}(\gamma)=\{\gamma'\}$. Thus we have $\tilde{\omega}_{P_{8}}(\gamma)=\omega_{P_{8}}(\gamma')=-1$. In the latter case, there uniquely exists an element $\gamma'$ in $\Gamma_{6}(P_{8})$ such that $\bar{\Phi}^{-1}_{P_{8},P_{9}}(\gamma)=\{\gamma'\}$. Since $\gamma'$ contains $w$, we have $\tilde{\omega}_{P_{8}}(\gamma)=\omega_{P_{8}}(\gamma')=-1$. 
(ii) If $\gamma$ does not contain one of $v,v'$ and $A$, then we may assume that $\gamma$ does not contain any of $x,u'$ or $v$, or $\gamma$ does not contain any of $v,v'$ or $A$. In any case, there exists an element $\gamma'_{1}$ in $\Gamma_{5}(P_{8})$ and an element $\gamma'_{2}$ in $\Gamma_{6}(P_{8})$ such that $\bar{\Phi}^{-1}_{P_{8},P_{9}}(\gamma)=\{\gamma'_{1},\gamma'_{2}\}$. Since $\gamma'_{2}$ contains $w$, we have $\tilde{\omega}_{P_{8}}(\gamma)=\omega_{P_{8}}(\gamma'_{1})+\omega_{P_{8}}(\gamma'_{2})=-1-1=-2$. 
If $\gamma$ belongs to $\Gamma_{5}(P_{9})$, then we have $\tilde{\omega}_{P_{8}}(\gamma)=-1$ in the same way as the case of $Q_{7}$. 
If $\gamma$ belongs to $\Gamma(P_{9})\setminus \cup_{k=5}^{9}\Gamma_{k}(P_{9})$, we have $\tilde{\omega}_{P_{8}}(\gamma)=0$. In conclusion, we see that 
\begin{eqnarray*}
&&{\omega_{P_{9}}}(\gamma)\\
&=&\left\{
       \begin{array}{@{\,}ll}
       1 & \mbox{if $\gamma\in \Gamma_{9}(P_{9})\cup \left\{\delta\in \Gamma_{8}(P_{9})\ |\ \delta \supset \left\{A,v,v'\right\}\right\}$}\\
       -1 & \mbox{if $\gamma\in \left\{\delta\in \Gamma_{7}(P_{9})\ |\ \delta\not\supset \left\{A,v,v'\right\}\right\}\cup \left\{\delta\in \Gamma_{6}(P_{9})\ |\ \delta \supset \left\{A,v,v'\right\}\right\}\cup \Gamma_{5}(P_{9})$}\\
       -2 & \mbox{if $\gamma\in \left\{\delta\in \Gamma_{6}(P_{9})\ |\ \delta\not\supset \left\{A,v,v'\right\}\right\}$}\\
       0 & \mbox{otherwise}
       \end{array}
     \right.
\end{eqnarray*}
for an element $\gamma$ in $\Gamma(P_{9})$. Thus it follows that $\tilde{\omega}_{P_{8}}=\omega_{P_{9}}$. 

(7) Let $G$ be $P_{10}$ which is obtained from $P_{9}$ by a single $\triangle Y$-exchange at $\triangle_{4}$. Note that $\{\triangle_{1},\triangle_{2},\triangle_{3},\triangle_{4}\}$ is stable. Let $\omega_{P_{9}}$ be the map from ${\Gamma}(P_{9})$ to ${\mathbb Z}$ as in (6) and $\tilde{\omega}_{P_{9}}$ the map from ${\Gamma}(P_{10})$ to ${\mathbb Z}$ defined by (\ref{c}) with respect to the $\triangle Y$-exchange at $\triangle_{4}$. In the following we show $\tilde{\omega}_{P_{9}}=\omega_{P_{10}}$. Let $\gamma$ be an element in $\Gamma(P_{10})$. Note that $\Gamma_{k}(P_{10})$ is the empty set if $k\neq 5,6,8,9$. 
If $\gamma$ belongs to $\Gamma_{9}(P_{10})$, then $\gamma$ does not contain exactly one of the vertices of $P_{10}$. Then it is sufficient to consider the following two cases up to symmetry of $P_{10}$: (i) If $\gamma$ does not contain $B$, then there uniquely exists an element $\gamma'$ in $\Gamma_{9}(P_{9})$ such that $\bar{\Phi}^{-1}_{P_{9},P_{10}}(\gamma)=\{\gamma'\}$. Thus we have $\tilde{\omega}_{P_{9}}(\gamma)=\omega_{P_{9}}(\gamma')=1$. (ii) If $\gamma$ contains $B$, then we may assume that $\gamma$ does not contain $A$. Then there exists an element $\gamma'_{1}$ in $\Gamma_{8}(P_{9})$ and an element $\gamma'_{2}$ in $\Gamma_{9}(P_{9})$ such that $\bar{\Phi}^{-1}_{P_{9},P_{10}}(\gamma)=\{\gamma'_{1},\gamma'_{2}\}$. Since $\gamma'_{1}$ does not contain $A$, we have $\tilde{\omega}_{P_{9}}(\gamma)=\omega_{P_{9}}(\gamma'_{1})+\omega_{P_{9}}(\gamma'_{2})=0+1=1$. 
If $\gamma$ belongs to $\Gamma_{8}(P_{10})$, then $\gamma$ does not contain exactly two of the vertices of $P_{10}$. Then we may assume that $\gamma$ does not contain either $x$ or $w$, or $\gamma$ does not contain either $A$ or $w$. In the former case, there uniquely exists an element $\gamma'$ in $\Gamma_{7}(P_{9})$ such that $\bar{\Phi}^{-1}_{P_{9},P_{10}}(\gamma)=\{\gamma'\}$. Since $\gamma'$ contains all of $A,v$ and $v'$, we have $\tilde{\omega}_{P_{9}}(\gamma)=\omega_{P_{9}}(\gamma')=0$. In the latter case, there exists an element $\gamma'_{1}$ in $\Gamma_{7}(P_{9})$ and an element $\gamma'_{2}$ in $\Gamma_{8}(P_{9})$ such that $\bar{\Phi}^{-1}_{P_{9},P_{10}}(\gamma)=\{\gamma'_{1},\gamma'_{2}\}$. Since $\gamma'_{1}$ does not contain $A$ and $\gamma'_{2}$ contains all of $A,v$ and $v'$, we have $\tilde{\omega}_{P_{9}}(\gamma)=\omega_{P_{9}}(\gamma'_{1})+\omega_{P_{9}}(\gamma'_{2})=-1+1=0$. 
If $\gamma$ belongs to $\Gamma_{6}(P_{10})$, then $\gamma$ does not contain exactly four of the vertices of $P_{10}$. Then we may assume that $\gamma$ does not contain any of $A,B,v$ or $v'$ or $\gamma$ does not contain any of $A,B,z$ or $w$. In any case, there uniquely exists an element $\gamma'$ in $\Gamma_{6}(P_{9})$ such that $\bar{\Phi}^{-1}_{P_{9},P_{10}}(\gamma)=\{\gamma'\}$. Since $\gamma'$ does not contain one of $A,v$ and $v'$, we have $\tilde{\omega}_{P_{9}}(\gamma)=\omega_{P_{9}}(\gamma')=-2$. If $\gamma$ belongs to $\Gamma_{5}(P_{10})$, then we have $\tilde{\omega}_{P_{8}}(\gamma)=-1$ in the same way as the case of $Q_{7}$. In conclusion, we see that 
\begin{eqnarray*}
\tilde{\omega}_{P_{10}}(\gamma)=\left\{
       \begin{array}{@{\,}ll}
       1 & \mbox{if $\gamma\in \Gamma_{9}(P_{10})$}\\
       -1 & \mbox{if $\gamma\in \Gamma_{5}(P_{10})$}\\
       -2 & \mbox{if $\gamma\in \Gamma_{6}(P_{10})$}\\
       0 & \mbox{otherwise}
       \end{array}
     \right.
\end{eqnarray*}
for an element $\gamma$ in $\Gamma(P_{10})$. Thus it follows that $\tilde{\omega}_{P_{9}}=\omega_{P_{10}}$. This completes the proof. 
\end{proof}

\begin{Remark}
Let us denote the $3$-cycle $[Axy]$ of $P_{7}$ by $\triangle_{6}$. Note that $P_{8}$ is obtained from $P_{7}$ by a single $\triangle Y$-exchange at $\triangle_{6}$. Let $\omega_{P_{7}}$ be the map from ${\Gamma}(P_{7})$ to ${\mathbb Z}$ as in Theorem \ref{petersen_refine} (4) and $\tilde{\omega}_{P_{7}}$ the map from ${\Gamma}(P_{8})$ to ${\mathbb Z}$ defined by (\ref{c}) with respect to the $\triangle Y$-exchange at $\triangle_{6}$. Then it can be shown that $\tilde{\omega}_{P_{7}}$ coincides with ${\omega}_{P_{8}}$. 
\end{Remark}


%
{\normalsize
}


\begin{thebibliography}{99}


\bibitem{CG83}
J. H. Conway and C. McA. Gordon, 
Knots and links in spatial graphs, 
{\it J. Graph Theory} {\bf 7} (1983), 445--453. 
















\bibitem{N09b}
R. Nikkuni, 
A refinement of the Conway-Gordon theorems, 
{\it Topology Appl.} {\bf 156} (2009), 2782--2794. 

\bibitem{NT12}
R. Nikkuni and K. Taniyama, 
$\triangle Y$-exchanges and the Conway-Gordon theorems, {\it J. Knot Theory Ramifications} {\bf 21} (2012), 1250067.

\bibitem{D10}
D. O'Donnol, 
Knotting and linking in the Petersen family, preprint. (arXiv:math.{\tt 1008.0377 })


\bibitem{RST95}
N. Robertson, P. Seymour and R. Thomas, 
Sachs' linkless embedding conjecture,  
{\it J. Combin. Theory Ser. B} {\bf 64} (1995), 185--227. 

\bibitem{S84}
H. Sachs, 
On spatial representations of finite graphs, 
{\it Finite and infinite sets, Vol. I, II (Eger, 1981),} 649--662, 
Colloq. Math. Soc. Janos Bolyai, {\bf 37}, {\it North-Holland, Amsterdam,} 1984. 

\bibitem{TY01}
K. Taniyama and A. Yasuhara, 
Realization of knots and links in a spatial graph, 
{\it Topology Appl.} {\bf 112} (2001), 87--109.


\end{thebibliography}
\end{document}